\documentclass[11pt]{article}
\usepackage{amssymb, ifthen,amsfonts,amssymb,amsmath,theorem, graphicx}
\usepackage{latexsym}
\usepackage[latin1]{inputenc}
\usepackage{draftwatermark}
        \SetWatermarkText{}
	\SetWatermarkLightness{0.9}
	\SetWatermarkScale{6}
\headheight 0cm
\headsep 0cm
\topskip 0cm
\footskip 0.5cm
\oddsidemargin 0.2cm
\evensidemargin 0.2cm
\topmargin 0cm
\textwidth 16.5cm
\textheight 22.5cm

\newcommand{\ds}{\displaystyle}
\newcommand{\RR}{\mathbb R}
\newcommand{\R}{\RR}

\def\H{{\mathcal H}}

\newcommand{\sm}{\setminus}
\newcommand{\g}{\gamma}
\newcommand{\Om}{\Omega}

\newcommand{\vps}{\varepsilon}

\newcommand{\ra}{\rightarrow}

\newcommand{\lra}{\longrightarrow}
\newcommand{\Lra}{\Longrightarrow}

\newcommand{\sr}{\stackrel}

\newcommand{\nif}{{n \rightarrow \infty}}

\newcommand{\sq}{\subseteq}
\newcommand{\ov}{\overline}

 \def\bbbq{{\mathchoice {\setbox0=\hbox{$\displaystyle\rm Q$}\hbox{\raise
 0.15\ht0\hbox to0pt{\kern0.4\wd0\vrule height0.8\ht0\hss}\box0}}
 {\setbox0=\hbox{$\textstyle\rm Q$}\hbox{\raise
 0.15\ht0\hbox to0pt{\kern0.4\wd0\vrule height0.8\ht0\hss}\box0}}
 {\setbox0=\hbox{$\scriptstyle\rm Q$}\hbox{\raise
 0.15\ht0\hbox to0pt{\kern0.4\wd0\vrule height0.7\ht0\hss}\box0}}
 {\setbox0=\hbox{$\scriptscriptstyle\rm Q$}\hbox{\raise
 0.15\ht0\hbox to0pt{\kern0.4\wd0\vrule height0.7\ht0\hss}\box0}}}}

\newtheorem{thm}{Theorem}[section]
\newtheorem{rem}[thm]{Remark}

\newtheorem{lem}[thm]{Lemma}

\newcommand{\qed}{\ifmmode$\Box$\else{\unskip\nobreak\hfil
        \penalty50\hskip1em\null\nobreak\hfil$\Box$
        \parfillskip=0pt\finalhyphendemerits=0\endgraf}\fi}

\newtheorem{demth}{Proof}
        
        \newenvironment{proof}{\begin{demth}\rm}{\qed\end{demth}}

\title{A new isoperimetric inequality for the elasticae }

\author{Dorin Bucur \and Antoine Henrot\thanks{The authors were supported by the Isaac Newton Institute programme "Free Boundary Problems and Related Topics" 2014 and  the ANR Optiform research programme, ANR-12-BS01-0007. }}

%
%


 \begin{document}
\maketitle
\bigskip

\begin{abstract}
For a smooth curve $\g$, we define its elastic energy as $E(\g)= \frac 12 \int_{\g} k^2 (s) ds$
where $k(s)$ is the curvature.
The main purpose of the  paper is to prove that among all smooth, simply connected,  bounded open sets of prescribed area in $\R^2$, the disc has the boundary with the  least elastic energy. In other words, for any bounded simply connected domain $\Omega$, the following
isoperimetric inequality holds: $E^2(\partial \Om)A(\Om)\ge \pi ^3$.
The analysis relies on the minimization of the elastic energy of drops enclosing a prescribed area, for  which we give as well an analytic answer. \end{abstract}

\section{Introduction}
Let $\Om$ be a smooth, bounded simply connected open set in the plane (the exact smoothness which is required will be made precise in Section \ref{prel}) and let us denote by $\partial \Om$ its boundary. Following L. Euler, we define its elastic energy as
\begin{equation}\label{bhe04e}
E(\partial \Om)= \frac 12 \int_{\partial \Om} k^2 (s) ds
\end{equation}
where $s$ is the curvature abscissa and $k$ is the curvature. We will denote by $A(\Om)$ the area of $\Om$ and $L(\Om)$ its perimeter.  The aim of this paper is to prove the following isoperimetric inequality.
\begin{thm}\label{bhe05}
For any bounded, smooth, simply connected open set $\Om\sq \R^2$
\begin{equation}\label{bhe04}
E^2(\partial \Om)A(\Om)\ge \pi ^3
\end{equation}
where equality holds only for the disc.
\end{thm}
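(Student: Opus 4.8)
The functional $J(\Om):=E^2(\d\Om)A(\Om)$ is invariant under the rescaling $\Om\mapsto\lb\Om$ (arclength scales by $\lb$ and curvature by $\lb^{-1}$, so $E\mapsto\lb^{-1}E$ while $A\mapsto\lb^2 A$). Hence \eqref{bhe04} is equivalent to the constrained problem of minimizing $E(\d\Om)$ among smooth simply connected domains of fixed area, say $A(\Om)=\pi$, the target value being $E=\pi$, attained by the unit disc (for which $k\equiv 1$, $L=2\pi$, $E=\pi$). The plan is to attack this constrained minimization by the direct method, but the essential difficulty — and the reason the disc is not immediately singled out — is that minimizing sequences may develop long thin fingers (large perimeter for fixed area). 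One therefore cannot close the argument with the naive bound $E\ge 2\pi^2/L$ coming from Cauchy--Schwarz and $\int_{\d\Om}k\,ds=2\pi$, since exploiting it would require an upper bound on $L$ that simply fails.

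The core of the argument is an auxiliary \emph{drop problem}, which I would solve first: minimize $\frac12\int_\g k^2\,ds$ over convex arcs $\g$ joining the two endpoints of a fixed chord and enclosing, together with that chord, a prescribed area. I would write the associated Euler--Lagrange equation, the elastica equation $2k_{ss}+k^3=c_1 k+c_2$ with $c_1,c_2$ the multipliers attached to the chord and the area. This equation admits a first integral and integrates in terms of elliptic functions; the boundary conditions then single out a one-parameter family of optimal drops (one parameter after scaling, namely the ratio area$/$chord$^2$), yielding a sharp explicit relation between a drop's energy, its enclosed area, and its chord length.

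To pass from drops to a general $\Om$, I would compare $\Om$ with its convex hull $K=\mathrm{conv}(\Om)$. The boundary $\d K$ consists of the convex arcs of $\d\Om$ (where $k\ge 0$) together with the chords bridging the concave parts; each connected component of $K\sm\ov\Om$ is a pocket bounded by one such chord and an arc of $\d\Om$, that is, precisely a drop. This decomposes $E(\d\Om)$ into the contribution of the convex arcs of $\d K$ plus the pocket contributions, while $A(\Om)=A(K)-\sum_j A(\text{pocket}_j)$. For the convex part one bounds the energy from below using $\int k\,ds=2\pi$ together with the area enclosed by $K$; for each pocket one inserts the sharp drop inequality. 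Minimizing the resulting finite-dimensional lower bound over the free parameters (the chord lengths and pocket areas, subject to the closing constraints) should return the scale-invariant value $\pi^3$, the minimum being attained only when there are no pockets and the convex boundary has constant curvature, i.e.\ only for the disc.

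The main obstacle is the drop problem together with its interface with the global estimate: establishing existence and the precise regularity of drop minimizers (with corners admitted only at the chord endpoints), integrating the elastica equation explicitly, and then verifying that the drop inequality is strong enough that creating any pocket strictly increases $J$. The equality discussion — tracing every inequality back so as to force $k\equiv\text{const}$ and the absence of concave arcs — is delicate but follows once the drop estimate is sharp. The remaining technical point is the compactness and lower semicontinuity of $\int k^2$ along minimizing sequences, needed to justify the direct method in the relaxed class that admits corners.
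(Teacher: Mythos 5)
Your proposal diverges from the paper's proof, and the divergence contains a genuine gap at the decisive step: the convex-hull/pocket decomposition with \emph{independently optimized} pockets produces a lower bound that is too weak to give $\pi^3$. Your inequality chain is: $E(\partial\Omega)=E(\partial K)+\sum_j E(\text{pocket arc}_j)\ge E(\partial K)+\sum_j g(\ell_j,a_j)$ and $A(\Omega)=A(K)-\sum_j a_j$, where $g(\ell,a)$ is the minimal energy of an arc with chord $\ell$ enclosing area $a$. This is valid, but the relaxation discards the constraint that each pocket arc must lie inside $K$ without crossing the hull arcs, and that loss is fatal. Concretely, let $\Omega_\eta$ be a smooth ``split ring'': the annulus of radii $1-\eta$ and $1$ with a thin slit near the bottom, smoothly capped. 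Its convex hull is essentially the unit disc, so the Gage term is about $\pi$; there is a single pocket (the central hole plus the slit) with a short chord and area $a\approx\pi$, and the unconfined optimum $g(\ell,a)$ is bounded (a free ``balloon'' of area $\pi$ costs only $O(1)$ energy, roughly $\pi^{3/2}/\sqrt{a}$). Hence your lower bound evaluates to about $\left(\pi+O(1)\right)^2\cdot(A(K)-a)=O(\eta)\to 0$, while the true value of $E^2(\partial\Omega_\eta)A(\Omega_\eta)$ stays above $\pi^3$ (the true pocket arc must hug the hull and turn around inside a gap of width $\eta$, costing energy of order $1/\eta$ --- precisely the mechanism your relaxed $g$ cannot see, since $g$ depends only on $(\ell,a)$ and not on $A(K)$ or the confinement). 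So the ``finite-dimensional lower bound'' you propose to minimize has infimum $0$, not $\pi^3$, and the final step of your program cannot be carried out without reinstating the coupling between pockets and hull, which destroys the finite-dimensional reduction.

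Two further points. First, your drop problem is posed over \emph{convex} arcs, but pocket arcs of a general $\Omega$ are not convex (they can wiggle and have their own concavities), so either the drop inequality does not apply to them, or you must solve the drop problem over arbitrary arcs --- and then its existence theory carries exactly the same difficulties (unbounded perimeter, self-intersection of limits) as the original problem; this is where the paper expends most of its technical effort. Second, for the convex part, Cauchy--Schwarz with $\int k\,ds=2\pi$ gives $E\ge 2\pi^2/L$, which combined with the isoperimetric inequality goes the \emph{wrong} way; you need Gage's inequality $E\ge \pi L/(2A)$, as the paper itself uses for the convex case. For contrast, the paper's drops are cusped closed curves (chord length zero) and are used only \emph{qualitatively}: a lower bound on the energy of a single optimal drop shows that any self-intersecting limit of a minimizing sequence (which would contain two drops) is too expensive; existence is obtained by confining to a large ball $B_R$ with the perimeter bound $L\le 2R^2E$, and the disc then emerges from the Euler--Lagrange elastica ODE, its period structure, and cut-and-reflect arguments --- no sharp chord-area drop inequality is ever needed.
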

In other words, using the behavior of the elastic energy on rescaling, we get that for every $A_0>0$, the disc is the unique solution for the minimization problem
$$\min\{E(\partial \Om) : A(\Om) \le A_0, \Om\; \mbox{bounded, smooth, simply connected open set of } \R^2\}.$$
More precisely, if we perform any scaling  of ratio $t$, we have $E(t\partial\Omega)=t^{-1} E(\partial \Om)$ and $A(t\partial\Omega)=t^{2} A(\partial \Om)$.
Therefore, it is classical to prove that the  following three minimization problems are equivalent (in the sense that any solution of one gives a solution of the others
after a suitable scaling):
\begin{description}
  \item[(i)] $\min E^2(\partial \Om)A(\Om)$
  \item[(ii)] $\min\{E(\partial \Om) : A(\Om) \le A_0\}$
  \item[(iii)] $\min E(\partial \Om) + A(\Om)$
\end{description}
Let us make some comments. For a detailed bibliography on closed elasticae, we refer to the classical \cite{lang-sing}
or the more recent \cite{Sat2}.
Inequality \eqref{bhe04} was already known for convex domains. Indeed, by a famous inequality due to M. Gage \cite{gage}, for any bounded convex domain
$$\frac{E(\partial \Om) A(\Om)}{L(\Om)} \ge \frac \pi2$$
with equality for the disc. Therefore,
$$E^2(\partial \Om) A(\Om) \ge E^2(\partial \Om) A(\Om) \frac{4\pi A(\Om)}{L^2(\Om)} \ge \frac{\pi^2}{4} \times 4 \pi =\pi ^3,$$
the first inequality being the classical isoperimetric inequality, and the second the Gage inequality. If the convexity hypothesis is dropped, then the Gage inequality is false (as shown by the counter-example of Figure \ref{dumbell}).

The simply connectedness assumption is necessary. Indeed,  if we take as a domain $\Om$ the ring
$$\Om_R =\{ (x,y): R < \sqrt{x^2+y^2} < R+\frac 1R\},$$
we get
$E(\partial \Om_R) = \frac{\pi}{ R}+ \frac{\pi R}{ R^2 +1}$, while
$$A(\Om_R) = \pi (R+\frac 1R)^2 -\pi R^2= 2\pi +\frac{\pi}{R^2}$$
 showing that $E^2(\partial \Om_R)A(\Om_R)\ra 0$ when $R \ra +\infty$.

In the same way, the boundedness assumption is also necessary. Let us consider the following unbounded domain, subgraph of a Gaussian function, but with finite area and elastic energy:
$$\Om_\alpha =\{ (x,y)\in \R^2 : -\infty <x< +\infty, 0<y< e^{-\alpha x^2/2}\}.$$
We have
$$A(\Om_\alpha) =\int_{-\infty}^{+\infty} e^{-\alpha x^2/2} dx =  \sqrt{\frac{2\pi }{{\alpha}}},$$
while
$$E(\partial \Om_\alpha)= \frac 12 \int_{-\infty}^{+\infty} \frac {(\alpha ^2 x^2 -\alpha )^2 e^{-\alpha x^2}}{(1+\alpha ^2 x^2 e^{-\alpha x^2})^\frac{5}{2}}dx =\frac{\alpha ^\frac{3}{2}}{2} \int_{-\infty}^{+\infty} \frac{(u^2-1)^2 e^{-u^2}}{(1+\alpha u^2 e^{-u^2})^\frac{5}{2}} du,$$
and we see that $E^2(\partial \Om_\alpha) A(\Om_\alpha) \ra 0$ as $ \alpha \ra 0$.

This shows that the assumptions in Theorem \ref{bhe05} can not be weakened. The proof of Theorem  \ref{bhe05} is a classical variational proof (existence, regularity, analysis of the optimality  conditions), but the existence part is by no means easy since we need a control on the perimeter of a minimizing sequence. The boundedness constraints on $E(\Om)$ and $A(\Om)$ do not ensure that the perimeter is uniformly bounded, as shown by a counter-example like a dumbell, see Figure \ref{dumbell}.
\begin{figure}[ht]
\begin{center}
\includegraphics[width=8cm]{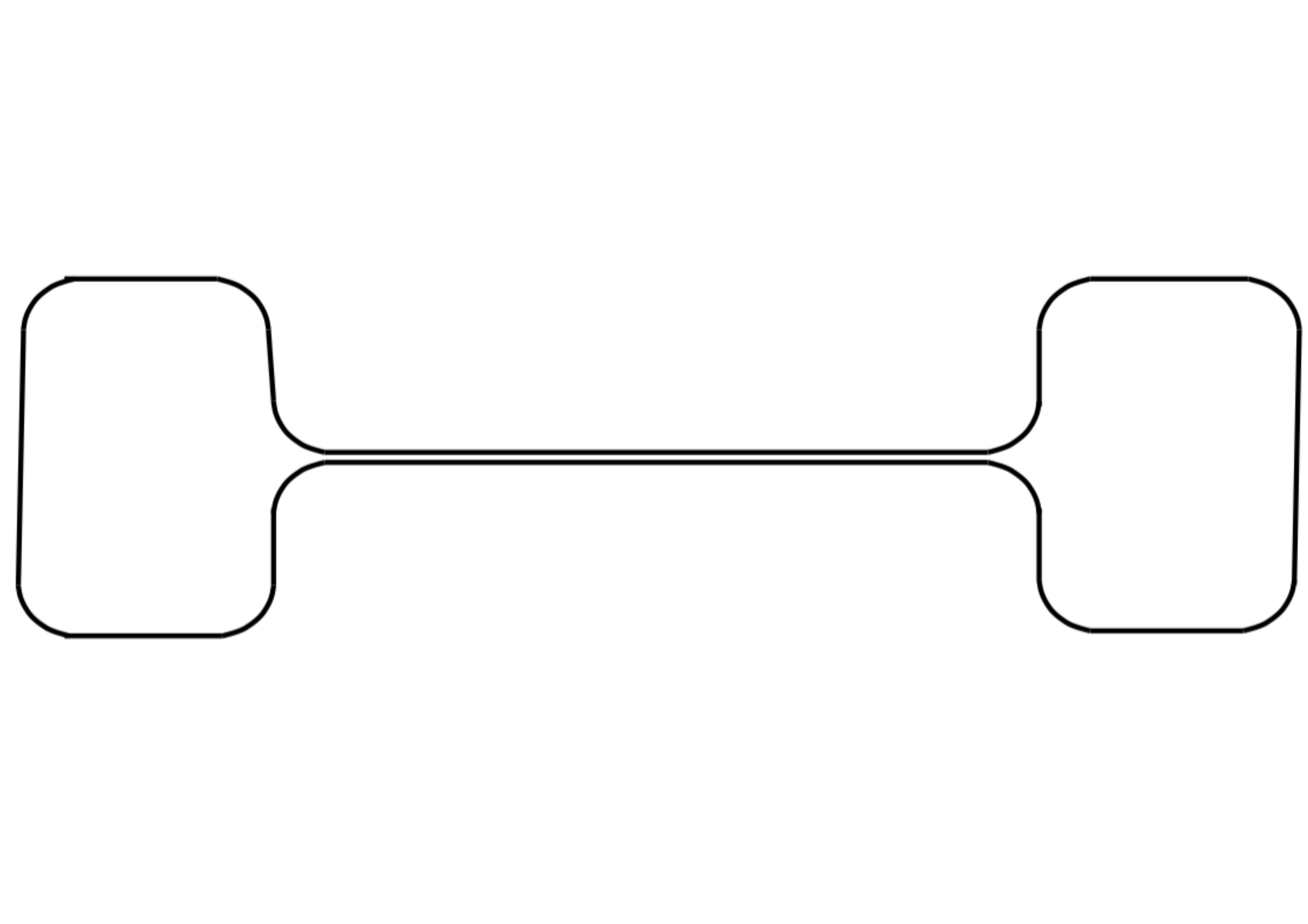}
\end{center}
\caption{A dumbbell with bounded area and elastic energy with a large perimeter}\label{dumbell}
\end{figure}

One has to be particularly careful that a minimizing sequence may a priori have a diameter going to infinity. The key point of our strategy is to analyze first the minimization of the elastic energy of drops enclosing a fixed area, i.e. closed loops without self-intersection points, which are smooth except one point, where the tangents are opposite. The result for drops will straight forward imply the conclusion of Theorem \ref{bhe05}, since we prove that the optimal loop is smooth  and satisfies  optimality conditions over the full boundary. A direct observation proves that the curve is a circle.

Here is our plan.
\begin{itemize}
\item We  solve the minimization problem
\begin{equation}\label{bhe06}
\min\{ E(\partial \Om)+ A(\Om) : \Om \; \mbox{ open, smooth, bounded, simply connected}\},
\end{equation}
which is equivalent to  \eqref{bhe04}.
\item We fix some radius $R>0$ and replace problem \eqref{bhe06} by
\begin{equation}\label{bhe07.b}
\min\{ E(\partial \Om)+ A(\Om) : \Om \sq B_R \; \mbox{ open, smooth,  simply connected}\},
\end{equation}
where $B_R$ is the ball centered at $0$ of radius $R$. We prove that every simply connected domain in $B_R$ satisfies
$L(\Om) \le R^2 E(\partial \Om)$. This is a key point for proving existence.
\item In order to be able to exploit the optimality conditions we have to deal with self-intersection points and with the points where the optimal set is touching the boundary of the ball $B_R$. For this reason, we analyze the problem
\begin{equation}\label{bhe08.b}
\min\{ E(\partial \Om)+ A(\Om) : \Om \sq B_R \; \mbox{ open, smooth, simply connected {\it drop}}\}.
\end{equation}
We refer to Section \ref{drops} for a precise definition of drops. We prove that  an optimal drop does not have self-intersection points and, if $R$ is large enough, it does not touch the boundary of the ball $B_R$ (up to a translation, inside the ball). Henceforth, optimality conditions allow us to exhibit precisely the optimal drop  and to evaluate its energy. This drop turns out to be {\it unique}.
\item We come back to problem \eqref{bhe07.b} and prove that a limit of minimizing sequence can not have self-intersection points and can not touch the boundary of $B_R$, provided that $R$ is large enough. Consequently, optimality conditions can be  written on all its boundary.     The elimination of self-intersection points relies on the previous result on drops, since the presence of at least one such point would make the energy not smaller than the double of the energy of the optimal drop, which turns out to be larger than the energy of a disc of radius $2^{-1/3}$. As optimality conditions can be written on the full loop, we prove that there are only
four shapes (all of them having a very simple description) which can be minimizers. An easy comparison argument, leads to the optimality of the ball.
\end{itemize}

\section{Preliminaries}\label{prel}

All curves $\g:[0,L]\ra \R^2$ are parametrized by the arc-length. We denote $\theta$ the angle of the tangent to $\g$ with respect to the axis $Ox$.
The curvature of $\g$ at the point $\g(s)$ will be denoted $k(s)$ and it is equal to $\theta '(s)$.  Since we shall work with curves with finite elastic energy, the function $\theta$  belongs to the Sobolev space $H^1(0,L)$. Using the embedding $H^1(0,L) \sq C^{0, \alpha} [0,L]$, for any $\alpha <1/2$, the function  $\theta$ is, in particular,  continuous.

All curves we work in this paper have finite elastic energy
$$E(\g)=\frac 12 \int_{[0,L]} |\theta '(s)|^2 ds<+\infty .$$

\begin{lem}\label{bh04}
Let $M>0$ and $\g :[0,L] \ra \R^2$ be a curve parametrized by the arc length such that $E(\g) \le M$. There exists $l=l(M) >0$ such that for every $s_0 \in [0, L-l]$ the curve is a graph in a local system of coordinates with a first axis aligned on $\theta (s_0)$, on a segment $[0, \frac{l}{\sqrt{2}} ]$, of a function $g : [0, \frac{l}{\sqrt{2}}] \ra \R$ with $g(0)= \g(s_0)$,   $g'(0)=0$ and such that $\forall t \in (0,L)\; |g'(t)| \le 1$.
\end{lem}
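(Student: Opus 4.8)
The plan is to use the fact that finite elastic energy forces the tangent angle $\theta$ to oscillate slowly, so that over a short enough arc the curve stays within a cone of half-angle $\pi/4$ around the direction $\theta(s_0)$, which is exactly the condition needed for it to be a $1$-Lipschitz graph. First I would estimate, for $s_0\le s\le s_0+l$, by Cauchy--Schwarz,
$$|\theta(s)-\theta(s_0)|=\left|\int_{s_0}^{s}\theta'(\tau)\,d\tau\right|\le \sqrt{s-s_0}\,\left(\int_{s_0}^{s}|\theta'|^2\,d\tau\right)^{1/2}\le \sqrt{2Ml},$$
where I used $\int_{0}^{L}|\theta'|^2=2E(\g)\le 2M$. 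I would then \emph{choose} $l=l(M):=\frac{\pi^2}{32M}$, so that $\sqrt{2Ml}\le \frac{\pi}{4}$ and hence $|\theta(s)-\theta(s_0)|\le \frac{\pi}{4}$ for all $s\in[s_0,s_0+l]$. Since $s_0\in[0,L-l]$, the whole arc $[s_0,s_0+l]$ lies inside $[0,L]$, so the estimate is available.

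Next I would introduce the local orthonormal frame with origin $\g(s_0)$ and first axis along $\theta(s_0)$, and write the curve in these coordinates as $(x(s),y(s))$. Because $\g$ is parametrized by arc length, the unit tangent yields $x'(s)=\cos(\theta(s)-\theta(s_0))$ and $y'(s)=\sin(\theta(s)-\theta(s_0))$. The bound $|\theta(s)-\theta(s_0)|\le \frac{\pi}{4}$ gives $x'(s)\ge \cos\frac{\pi}{4}=\frac{1}{\sqrt2}>0$, so $x$ is strictly increasing (hence injective) on $[s_0,s_0+l]$, and $x(s_0+l)\ge \frac{1}{\sqrt2}\,l$. Consequently the image of $x$ contains the segment $[0,\tfrac{l}{\sqrt2}]$, and I may define $g:=y\circ x^{-1}$ on $[0,\tfrac{l}{\sqrt2}]$, with $g(0)=0$ corresponding to $\g(s_0)$. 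Differentiating, $g'(x)=y'(s)/x'(s)=\tan(\theta(s)-\theta(s_0))$, whence $g'(0)=\tan 0=0$ and $|g'|\le \tan\frac{\pi}{4}=1$ throughout, which are precisely the claimed properties.

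The only point that needs care, and which I regard as the main (minor) obstacle, is justifying the reparametrization cleanly: one must check that $s\mapsto x(s)$ is a genuine $C^1$ bijection onto an interval containing $[0,\tfrac{l}{\sqrt2}]$ and that $g=y\circ x^{-1}$ inherits $C^1$ regularity together with the derivative formula above. This follows from the continuity of $\theta$ granted by the embedding $H^1(0,L)\hr C^{0,\alpha}[0,L]$, which makes $x'$ and $y'$ continuous and bounds $x'$ away from $0$ by $\tfrac{1}{\sqrt2}$; the inverse function theorem then gives $x^{-1}\in C^1$ and the chain rule yields the stated expression for $g'$. The estimate itself is elementary, and the value of $l(M)$ is uniform in $s_0$, as required.
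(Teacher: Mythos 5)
Your proof is correct, and it reaches the paper's exact threshold $l(M)=\pi^2/(32M)$ by a genuinely different and more elementary route. The paper argues geometrically: it takes the first parameter $s_1>s_0$ at which $|\theta(s_1)-\theta(s_0)|=\pi/4$, reflects the arc $\g|_{[s_0,s_1]}$ eight times to build a closed loop, and bounds the energy of the boundary $C$ of the convex hull of that loop from below via Gage's inequality combined with the isoperimetric inequality, obtaining $16M\ge \int_C k^2\,ds\ge \pi^2/\left(2(s_1-s_0)\right)$ and hence $s_1-s_0\ge \pi^2/(32M)$; this route implicitly uses the nontrivial fact that passing to the convex hull does not increase $\int k^2$. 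You get the same angle bound $|\theta(s)-\theta(s_0)|\le \pi/4$ on $[s_0,s_0+l]$ from a one-line Cauchy--Schwarz estimate, which is in fact exactly the computation the paper isolates just afterwards as Lemma \ref{bh02}; so your argument is shorter, self-contained, avoids both the reflection construction and Gage's inequality, and loses nothing in the constant. You also spell out the step the paper leaves implicit: since $x'(s)=\cos\left(\theta(s)-\theta(s_0)\right)\ge 1/\sqrt{2}$, the map $s\mapsto x(s)$ is a $C^1$ bijection onto an interval containing $[0,l/\sqrt{2}]$, so $g=y\circ x^{-1}$ is well defined and $C^1$ with $g'(0)=0$ and $|g'|=|\tan(\theta-\theta(s_0))|\le 1$. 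Both proofs are complete; yours buys economy of means, while the paper's geometric argument yields the identical constant at greater expense.
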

\begin{proof}
Let us fix $s_0$ and consider the smallest $s_1 >s_0$ such that $|\theta (s_1)-\theta(s_2)|=\frac\pi 4$. If $s_1$ does not exist, then the conclusion follows directly.

We reproduce the curve $\g_{[s_0,s_1]}$ eight times, taking successively a reflection with respect to the  line passing trough the point $\g(s_1)$ and orthogonal to the tangent at $\g(s_1)$, then the same procedure for the image of $\g(s_0)$ and a last reflection in order to close the loop.

\begin{figure}[ht]
\begin{center}
\includegraphics[width=6cm]{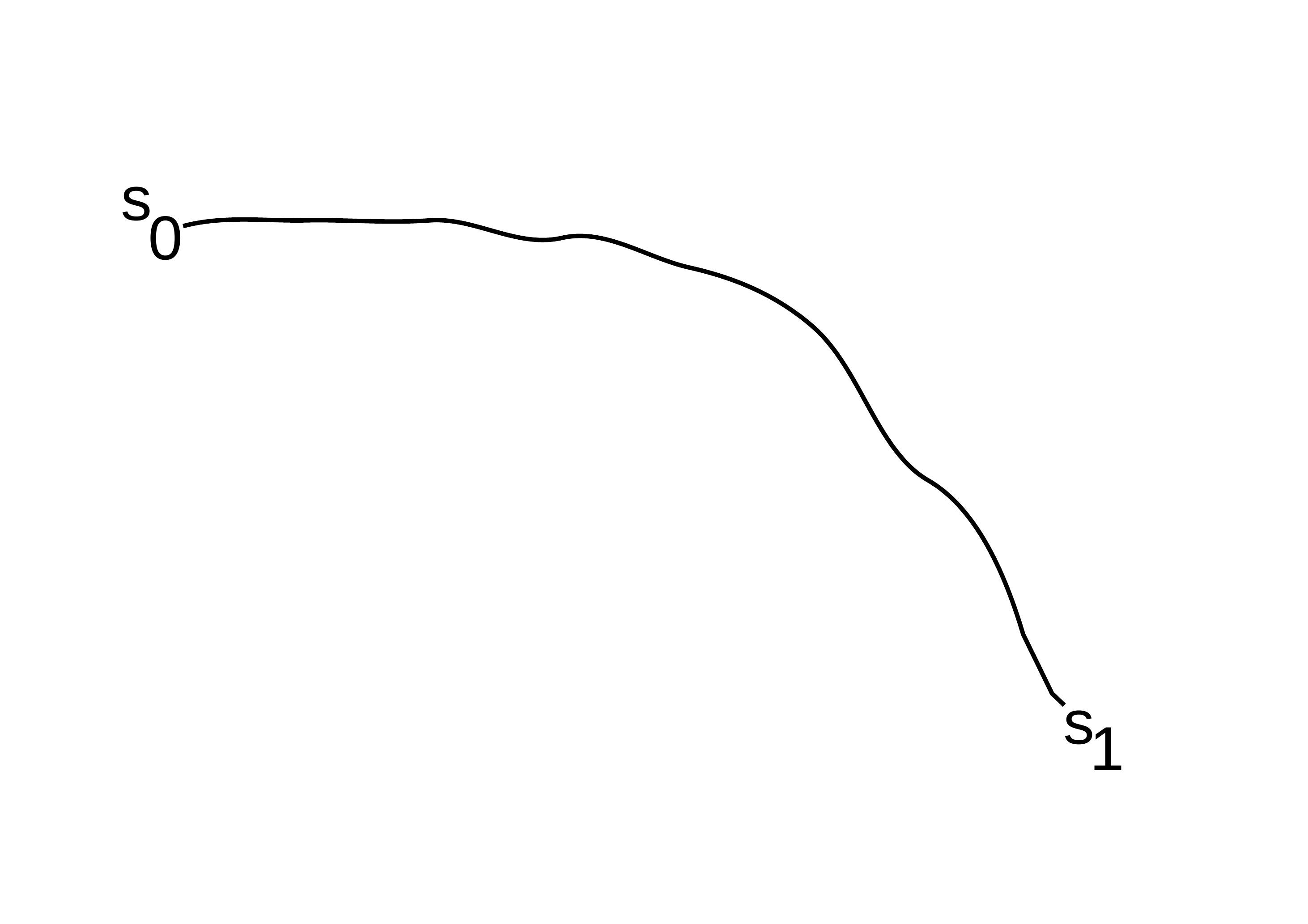}\includegraphics[width=6cm]{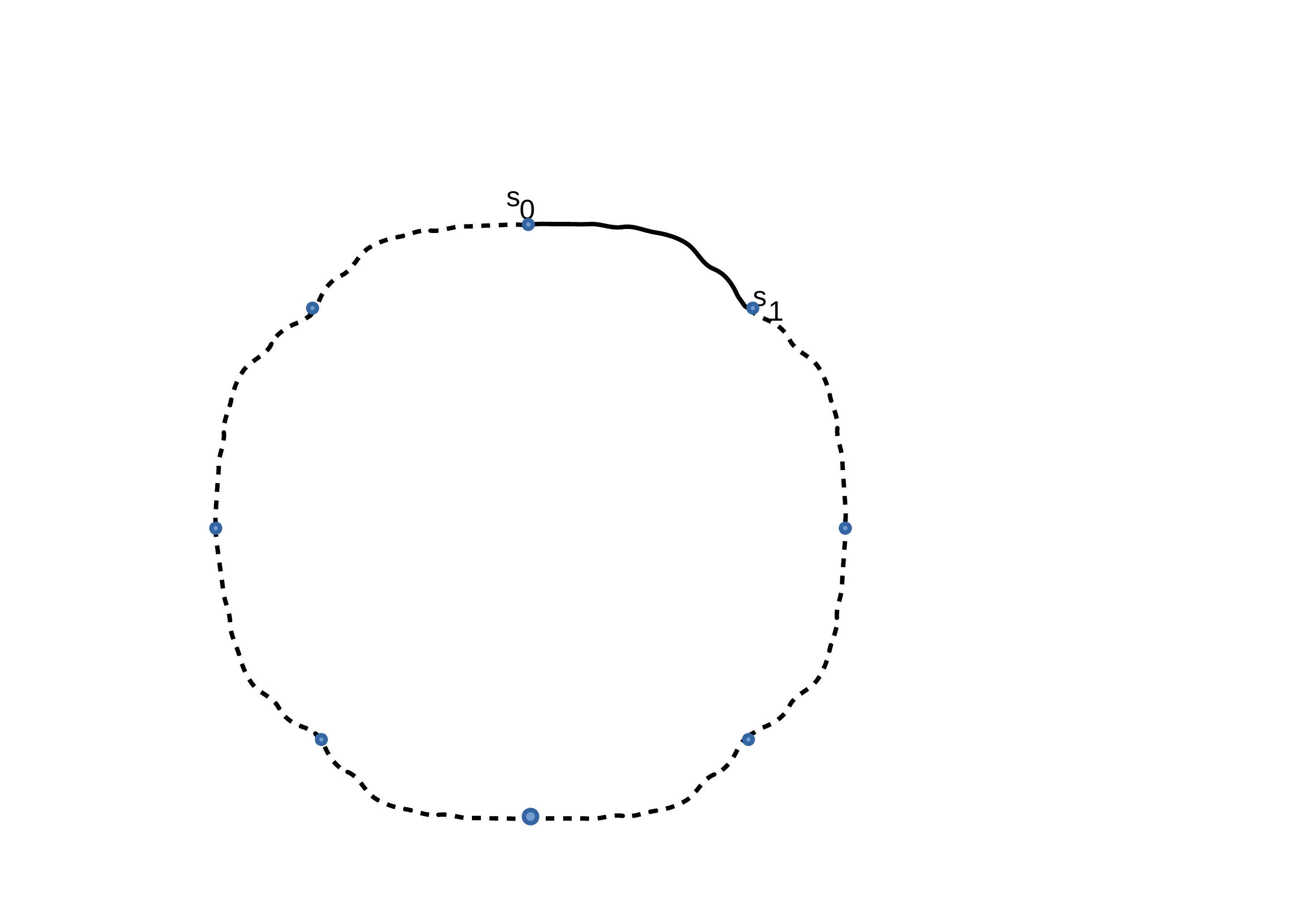}
\end{center}
\caption{Initial curve $\g|_{[s_0,s_1]}$ and the (rescaled) loop built from the curve}\label{fig1n}
\end{figure}
Let us denote by $C$ the curve which is the boundary of the convex envelope of the loop. From \cite{gage}, we have
$$\int_C k^2 ds  \ge \frac{\pi |\H^1(C)|}{Area(C)} \ge \frac{\pi |\H^1(C)|}{\frac{|\H^1(C)|^2}{4\pi}} \ge \frac{4\pi^2}{ 8(s_1-s_0)}.$$
But
$$8\int_{\g_{[s_0,s_1]}} |\theta'|^2 ds \ge \int_C k^2 ds $$
thus $$128 M \ge \frac{4\pi^2}{ s_1-s_0},$$
hence
$$s_1-s_0\ge \frac{\pi^2}{32M}.$$
Denoting $l= \frac{\pi^2}{32M}$, we conclude the proof.

\end{proof}
\begin{rem} {\rm
The assertion of this lemma is of course available on backwards, so that the curve is locally a graph in a neighborhood of each point, over an interval of controlled length.
}
\end{rem}
\begin{lem}\label{bh02}
Let $\g :[0,L] \ra \R^2$ be a curve parameterized by the arc length such that $E(\g)< +\infty$. If $\vps >0$ is given and $0\le s<t\le L$ are such that
$$|\theta (s)-\theta(t)|=\vps,$$
then
$$\int_{[s,t]} |\theta'|^2ds  \ge \frac{\vps^2}{L}.$$
\end{lem}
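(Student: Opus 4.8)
The plan is to derive the inequality directly from the fundamental theorem of calculus together with the Cauchy--Schwarz inequality, so there is no genuine obstacle here; this is a quantitative coercivity estimate that trades an increment in the tangent angle $\theta$ against a lower bound on the local elastic energy. Since $\theta \in H^1(0,L)$ (as recalled in the preliminaries), it is absolutely continuous and $\theta' \in L^2$, so all the manipulations below are legitimate.

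First I would write the prescribed angle increment as an integral of the derivative: because $\theta$ is absolutely continuous on $[s,t]$,
$$\vps = |\theta(t)-\theta(s)| = \left| \int_s^t \theta'(u)\, du \right| \le \int_s^t |\theta'(u)|\, du.$$
Next I would apply the Cauchy--Schwarz inequality to the last integral, splitting the integrand as $|\theta'| \cdot 1$ over the interval $[s,t]$ of length $t-s$:
$$\int_s^t |\theta'(u)|\, du \le \left( \int_s^t |\theta'(u)|^2\, du \right)^{1/2} (t-s)^{1/2}.$$
Squaring and rearranging then yields the sharp local bound
$$\int_s^t |\theta'(u)|^2\, du \ge \frac{\vps^2}{t-s}.$$

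Finally I would weaken this to the stated form by using that the subinterval $[s,t]$ is contained in $[0,L]$, so $t-s \le L$ and hence $\frac{\vps^2}{t-s} \ge \frac{\vps^2}{L}$, which gives exactly the claimed inequality. The only thing worth flagging is that the estimate is intentionally stated with the global length $L$ in the denominator rather than the sharper $t-s$; the version with $t-s$ is what the Cauchy--Schwarz step naturally produces, and the passage to $L$ is a deliberate (lossy) simplification, presumably because later applications only need a uniform lower bound in terms of $L$. There is no subtle point to overcome: the entire content is the single Cauchy--Schwarz step, and the absolute continuity of $\theta$ guaranteed by $\theta \in H^1(0,L)$ justifies the use of the fundamental theorem of calculus.
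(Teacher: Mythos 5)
Your proof is correct and follows exactly the paper's argument: write $\theta(t)-\theta(s)$ as $\int_s^t \theta'$, apply Cauchy--Schwarz to get $\vps^2 \le (t-s)\int_{[s,t]}|\theta'|^2$, and then use $t-s\le L$. The paper compresses this into a single line, but the content is identical.
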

\begin{proof}
As $\int _{[0,L]} |\theta'|^2ds <+\infty $, we write
$$|\theta (s)-\theta(t)| =\left|\int_s^t \theta'(u) du \right| \le |t-s|^\frac12 (\int_{[s,t]} |\theta'|^2 )^\frac12$$
which gives the result.
\end{proof}
\begin{rem} {\rm
The idea coming out of the lemma is that if there is an $\vps$-variation of the angle, the elastic energy on that section of the curve is at least a constant times $\vps^2$, the constant depending on the global  length of the curve. }
\end{rem}
Let $B_R$ be a ball of radius $R$.
\begin{lem}\label{bhe20}
Let $\g :[0,L] \ra \R^2$ be a smooth loop parameterized by the arc length such that $E(\g)< +\infty$ and $\g([0,L]) \sq B_R$. Then
$$L\le 2R^2 E(\g).$$
\end{lem}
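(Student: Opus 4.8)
The plan is to exploit that the curve is closed and lies inside $B_R$ by pairing the position vector $\g$ against its second derivative $\g''$, and then applying Cauchy--Schwarz to convert the resulting $\int |k|$ into $\int k^2 = 2E(\g)$. First I would translate coordinates so that the center of $B_R$ is the origin; then $|\g(s)| \le R$ for all $s$. Writing $\g'(s) = (\cos\theta(s), \sin\theta(s))$ and $\g''(s) = k(s) \bn(s)$ with $\bn$ the unit normal and $k = \theta'$, I compute
$$\int_0^L \langle \g(s), \g''(s)\rangle\, ds = \Big[\langle \g(s), \g'(s)\rangle\Big]_0^L - \int_0^L |\g'(s)|^2\, ds = -L,$$
where the boundary term vanishes because $\g$ is a smooth loop, so $\g(0)=\g(L)$ and $\g'(0)=\g'(L)$, while $|\g'|\equiv 1$ gives the second integral value $L$.

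Next I would bound the integrand pointwise. Since $\g''(s) = k(s)\bn(s)$ with $|\bn(s)|=1$ and $|\g(s)| \le R$, the Cauchy--Schwarz inequality in $\R^2$ yields $|\langle \g(s), \g''(s)\rangle| \le R\,|k(s)|$. Combining this with the identity above,
$$L = -\int_0^L \langle \g, \g''\rangle\, ds \le R \int_0^L |k(s)|\, ds.$$
Then a second application of Cauchy--Schwarz, this time in $L^2(0,L)$, gives $\int_0^L |k|\,ds \le \sqrt{L}\,\big(\int_0^L k^2\,ds\big)^{1/2} = \sqrt{L}\,\sqrt{2E(\g)}$, whence $L \le R\sqrt{L}\,\sqrt{2E(\g)}$. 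Dividing by $\sqrt{L}$ and squaring produces exactly $L \le 2R^2 E(\g)$.

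This argument is short, so there is no serious obstacle; the only points requiring a little care are that the boundary contribution in the integration by parts truly cancels (which is where the closedness of the loop is used in an essential way) and that the two Cauchy--Schwarz steps are chained so the power of $R$ and the constant $2$ come out sharply. Under the stated smoothness the integration by parts is justified directly, and for merely $H^1$ angle function $\theta$ one would note that $k = \theta' \in L^2$ makes $\langle \g, \g''\rangle$ integrable and the Sobolev fundamental theorem of calculus still applies, so the identity and hence the estimate persist.
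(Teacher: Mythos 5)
Your proposal is correct and follows essentially the same route as the paper: integration by parts of $L=\int_0^L |\g'|^2\,ds$ against the position vector (with the boundary term vanishing by closedness), the pointwise bound $|\langle \g,\g''\rangle|\le R|k|$, and a final Cauchy--Schwarz in $L^2(0,L)$ to reach $L\le 2R^2E(\g)$. The only difference is cosmetic: you make explicit the normalization that $B_R$ is centered at the origin and the justification of the boundary cancellation, which the paper leaves implicit.
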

\begin{proof}
Denoting $\g(s)=(x(s), y(s))$, we have
$$L= \int _0^L x'^2(s) +y'^2(s) = - \int _0^L x(s)x''(s)+ y(s) y''(s) ds.$$
But $ | x(s)x''(s)+ y(s) y''(s)| \le (x^2(s)+y^2(s))^{\frac 12} (x''^2(s)+y''^2(s))^{\frac 12} \le R |k(s)|$. Therefore, the conclusion of the lemma follows from the Cauchy-Schwarz inequality
$$L^2 \le R^2 L \int _0^L k^2(s) ds.$$
\end{proof}

Assume that a simply connected open set $\Om$ is bounded by  a loop $\g$ which does not have self intersections on $(s_0,s_0+L)$. We shall call this piece of curve, a free branch. Let us first give the optimality conditions satisfied by any free branch of an optimal domain. On such a free branch, we can perform any (small and compactly supported, smooth) perturbation.
\begin{thm}[Optimality conditions]\label{opti}
Let $\g$ be any free branch of a minimizer $\Om$ of  the energy $E(\partial\Om)+A(\Om)$. Then $s\mapsto k(s)$ is $C^\infty$
on $\g$ and satisfies:
\begin{enumerate}
\item [(B1)] $k''=-\frac 12 k^3  +1$
\item [(B2)] $k'^2=-\frac 14 k^4 +2k+ 2C$, for some constant $C$
\item [(B3)] $\exists Q \in \R^2$, such that $\forall M\in \g$:  $QM^2= 2k+2C$, for some constant $C$
\item [(B4)] $\exists Q \in \R^2$, such that $\forall M\in \g$: $QM.\nu= \frac 12 k^2$ where $\nu$ is a normal vector to $\g$.
\end{enumerate}
\end{thm}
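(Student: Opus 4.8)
\medskip
\noindent\textbf{Proof strategy.}

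The plan is to obtain (B1) from the vanishing of the first variation of $E(\partial\Om)+A(\Om)$ under compactly supported normal perturbations of the free branch, to get (B2) as a first integral, and then to read off (B3)--(B4) from a single conserved point $Q$. Write $T=\g'$ for the unit tangent and let $N$ be $T$ rotated by $+\pi/2$, so that the Frenet relations $T'=kN$ and $N'=-kT$ hold. For $\phi\in C_c^\infty$ supported in the interior of the free branch I consider the perturbation $\g_t=\g+t\,\phi\,N$; since $\phi$ vanishes near the endpoints the perturbed curve is admissible for small $t$. The first variations of the bending energy and of the signed area under such a normal variation are
\begin{equation*}
\delta E=\int\Big(k\,\phi''+\tfrac12 k^3\,\phi\Big)\,ds,\qquad \delta A=-\int\phi\,ds,
\end{equation*}
the orientation being fixed so that moving along $N$ decreases the area. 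Optimality $\delta(E+A)=0$ for every such $\phi$ gives the weak Euler--Lagrange equation $\int\big(k\phi''+(\tfrac12 k^3-1)\phi\big)\,ds=0$, whose strong form is precisely (B1), namely $k''=-\tfrac12 k^3+1$. (As a check, the constant curvature of a circle forces $\tfrac12 k^3=1$, i.e.\ radius $2^{-1/3}$, matching the value anticipated in the introduction.)

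Before (B1) can be used pointwise I must upgrade $k$ from $L^2$ to $C^\infty$. I would do this locally using the graph representation of Lemma~\ref{bh04}: each free branch is the graph of an $H^2$ function minimizing a one-dimensional functional whose integrand is smooth and strictly convex in the second derivative, so standard regularity theory for such problems (equivalently, solving for the curvature through the momentum variable $\partial F/\partial u''$) yields $k\in C^0$; the relation $k''=-\tfrac12 k^3+1$ then bootstraps at once to $k\in C^\infty$. With (B1) available classically, multiplying by $2k'$ and integrating gives $k'^2=-\tfrac14 k^4+2k+2C$, which is (B2).

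For (B3)--(B4) the key idea is to introduce the point
\begin{equation*}
Q(s)=\g(s)-k'(s)\,T(s)+\tfrac12 k^2(s)\,N(s).
\end{equation*}
Differentiating and using the Frenet relations together with (B1),
\begin{equation*}
Q'=\big(1-k''-\tfrac12 k^3\big)\,T=0,
\end{equation*}
so $Q$ is a fixed point of $\R^2$. Consequently $\overrightarrow{QM}=\g-Q=k'T-\tfrac12 k^2 N$, and since $T,N$ are orthonormal,
\begin{equation*}
QM^2=k'^2+\tfrac14 k^4=2k+2C
\end{equation*}
by (B2), which is (B3); while $\overrightarrow{QM}\cdot N=-\tfrac12 k^2$, that is $\overrightarrow{QM}\cdot\nu=\tfrac12 k^2$ with $\nu=-N$, which is (B4).

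The algebra of (B1)--(B4) is short; the genuinely delicate point is the rigorous first variation together with the regularity bootstrap. Perturbing an arc-length parametrized curve changes its own length element, so the variation formulas must be justified with the reparametrization built in; and, more seriously, the cubic term $k^3$ is not a priori integrable against test functions when only $k\in L^2$ is known. Establishing that $k$ is bounded --- where the convexity of the energy in the highest-order derivative is essential --- is thus the real obstacle, after which the remaining steps are routine.
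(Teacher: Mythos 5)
Your proposal is correct, but it reaches (B3)--(B4) by a genuinely different mechanism than the paper, and the comparison is instructive. For (B1) both proofs are first-variation arguments: your normal-perturbation formulas agree, after integration by parts, with the shape derivative the paper quotes from \cite{bht14}. The real divergence is in how the endpoint constraint and the low regularity are handled, and then in the source of the point $Q$. The paper parametrizes the branch by the tangent angle $\theta$, writes $E$ and $A$ as functionals of $\theta$ alone, and, since compactly supported perturbations of $\theta$ move the endpoints of the branch, imposes the endpoint constraints \eqref{opti0} with Lagrange multipliers $\lambda_1,\lambda_2$; the resulting Euler--Lagrange equation \eqref{opti2} involves only bounded nonlinearities ($\cos\theta$, $\sin\theta$ and their primitives), so it is valid for $\theta\in H^1$ and bootstraps at once to $C^\infty$. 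This is precisely how the paper evades the ``$k^3$ not integrable'' obstacle that you correctly flag in your last paragraph; your graph-coordinate fix is viable (in graph coordinates the weak Euler--Lagrange equation has coefficients in $L^2$ and $L^1$, and a du Bois--Reymond argument plus bootstrap applies), but it must be carried out \emph{before} your normal-perturbation derivation of (B1), which is otherwise not justified when $k$ is merely $L^2$ --- so your steps should be reordered. After that, the paper obtains (B3) by integrating \eqref{opti2}, the point $Q=(x(0)+\lambda_2,\,y(L)-\lambda_1)$ being manufactured from the Lagrange multipliers, and deduces (B4) by differentiating (B3) twice and using (B1) --- a step that requires dividing by $k$ and then an extra argument (zeros of $k$ are isolated) to extend (B4) across the zeros by continuity. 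Your conserved point $Q=\g-k'T+\tfrac12 k^2 N$, with $Q'=\bigl(1-k''-\tfrac12 k^3\bigr)T=0$ by Frenet plus (B1), yields (B3) and (B4) simultaneously, everywhere, with no division by $k$; it also makes automatic what the paper must argue in a separate Remark, namely that the constant $C$ in (B2) and (B3) and the point $Q$ in (B3) and (B4) coincide. In short, the paper's multiplier route buys rigor at $H^1$ regularity (one computation delivers both the smoothness and (B3)), while your route buys a shorter, cleaner and sharper passage from (B1)--(B2) to (B3)--(B4).
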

\begin{rem}
The point $Q$ in (B3), (B4) is the same (see the proof below). The constant $C$ in (B2), (B3) is also the same. To see that, take
a point $M_M$ on $\g$ where the curvature $k$ is maximum. If this point does not exist, just extend the curve
with the same ODE. Then, according to (B3), $QM_M$ is also maximum and the normal derivative of the boundary at this point is $QM_M/|QM_M|$.
Therefore (B4) yields $QM_M=\frac 12 k^2$ and plugging into (B3) gives (B2), because $k'=0$ at this point, with the same constant.
\end{rem}
\begin{proof}
The $C^\infty$ regularity of $k(s)$ (and $\theta(s)$) comes from a bootstrap argument and equation (\ref{opti2}) below.
The first condition (B1) comes from the classical {\it shape derivative} of the elastic energy (under small perturbation
of the boundary driven by some smooth vector field $V:\R^2 \to \R^2$), see \cite[chapter 5]{HP} for more details on the shape derivative. Following e.g. the Appendix in
\cite{bht14}, we see that it is given by
$$d E(\partial \Om,V)=-\int_\g  \left( \frac 12 k(s)^3 + k''(s) \right) \langle V, \nu\rangle \ ds$$
while the derivative of the area is classically
$$d A(\Om,V)=\int_\g  \langle V, \nu\rangle \ ds$$
Condition (B1) follows since the derivative of $E+A$ must vanish for any $V$. We obtain condition (B2) multiplying
(B1) by $k'$ and integrating.

To get condition (B3), we use another expression of the elastic energy and the area. Namely, with the parametrization with the angle $\theta$
we have (see \cite{bht14} for more details):
$$E(\g)=\frac 12 \int_\g {\theta'}^2 ds:=e(\theta),\quad A(\Om)=\int\int_T \cos\theta(u)\sin\theta(s)\,du\,ds:=a(\theta)$$
where $T$ is the triangle $T=\{(u,s)\in \mathbb{R}^2 \ ; \ 0\leq u\leq s\leq L(\Omega)\}$. We note $L$ for $L(\Om)$. Thus we are led to minimize the sum
$e(\theta)+a(\theta)$ with the following constraints (the starting and the ending point of the branch $\g$ are fixed).
\begin{equation}\label{opti0}
\int_0^L \cos(\theta(s))\,ds=x(L)-x(0), \qquad \int_0^L \sin(\theta(s))\,ds=y(L)-y(0).
\end{equation}
The derivative of $e(\theta)$ is (for a perturbation $v$ compactly supported)
$$\langle de(\theta),v\rangle=\int_0^L \theta' v' ds= - \int_0^L \theta'' v ds$$
while the derivative of $a(\theta)$ is given by
$$\langle da(\theta),v\rangle=\int\int_T \cos\theta(u)\cos\theta(s) v(s) - \sin\theta(s)\sin\theta(u) v(u) du ds.$$
Using (\ref{opti0}) and Fubini, we can write
$$\int\int_T \sin\theta(s)\sin\theta(u) v(u) du ds= (y(L)-y(0))\int_0^L \sin\theta(s)v(s) ds
- \int\int_T \sin\theta(u)\sin\theta(s) v(s) du ds.$$
Therefore, the optimality condition for the constrained problem reads: there exists Lagrange multipliers $\lambda_1,\lambda_2$
such that, for any $v$:
\begin{eqnarray}\label{opti1}
    - \int_0^L \theta'' v ds + \int_0^L \left(\cos\theta(s) \int_0^s \cos(\theta(u) du + \sin\theta(s) \int_0^s \sin(\theta(u) du\right) v(s) ds =\\ \nonumber
 = (y(L)-y(0))\int_0^L \sin\theta(s)v(s) ds  - \lambda_1 \int_0^L \sin\theta(s) v(s) ds + \lambda_2 \int_0^L \cos\theta(s) v(s) ds.
\end{eqnarray}
which implies (thanks to $x'(s)=\cos\theta(s), y'(s)=\sin\theta(s)$)
\begin{equation}\label{opti2}
-\theta''+ x'(x-x(0))+y'(y-y(0))=(y(L)-y(0) -\lambda_1) y' + \lambda_2 x'
\end{equation}
By integration, we get (B3) setting $Q=(x(0)+\lambda_2, y(L)-\lambda_1)$.

At last, differentiating twice (B3) we get $k'=QM.\tau$ (where $\tau$ is the tangent vector) and
$k''=1- k QM.\nu$. Using (B1) we see that $\frac 12 k^3 = k QM.\nu$, so (B4) holds where $k\not= 0$. Since $k$ is a solution
of the ODE (B1), and therefore can be written with elliptic functions, it can only vanish on isolated points and thus (B4) holds everywhere by continuity of both members.
\end{proof}
In the following lemma, we  assume that the simply connected open set $\Om$ is a minimizer of  the energy $E(\partial\Om)+A(\Om)$.
\begin{lem}\label{be03}
Any free branch of a  minimizer $\Om$ has a length $L$ uniformly bounded by
$$L\le 146.$$
\end{lem}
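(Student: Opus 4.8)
The plan is to derive a quantitative upper bound on the length $L$ of a free branch by combining the optimality conditions from Theorem \ref{opti} with the finite-energy structure of the branch. The essential idea is that along a free branch the curvature $k$ solves the ODE (B1), $k'' = -\frac12 k^3 + 1$, and therefore oscillates in a controlled way; I would use the first integral (B2), $k'^2 = -\frac14 k^4 + 2k + 2C$, to show that $k$ is bounded in terms of $C$ and that it is periodic (or at least that its angle $\theta$ accumulates a definite amount of turning over each period). A purely geometric bound alone cannot work, since a free branch can be long while staying in a bounded region; the gain must come from the optimality equations forcing a lower bound on how fast $\theta$ must vary, which when combined with a bound on the total energy $E(\partial\Om)+A(\Om)$ (itself bounded by comparison with a fixed competitor such as a disc) caps the length.

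First I would fix a reference competitor to bound the optimal energy from above: the disc of area equal to that of $\Om$, or more simply the disc realizing the minimum in problem (iii), so that $E(\partial\Om)+A(\Om)$ is at most some explicit constant. This gives in particular $E(\partial\Om)\le M_0$ for an explicit $M_0$, hence $\int_\g k^2\,ds \le 2M_0$ on the branch. Next, from relation (B3), $QM^2 = 2k + 2C$, the quantity $2k+2C$ equals a squared distance and so is nonnegative; evaluating (B3) and (B4) at the point of maximal curvature (as in the Remark following Theorem \ref{opti}) pins down the constant $C$ and bounds the maximal value of $k$. The heart of the argument is then to show that $\theta$ must sweep a fixed minimal amount of angle per unit of length: using Lemma \ref{bh02}, an $\eps$-variation of $\theta$ over a subinterval costs at least $\eps^2/L$ in energy, so the number of such oscillations is bounded by $2M_0 L/\eps^2$; conversely the ODE forces at least one $\eps$-oscillation on every subinterval of a fixed length $\ell_0$ determined by the period of the elliptic-function solution of (B1).

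Putting these together, I would argue that the branch decomposes into at most $N$ intervals, each of length at most $\ell_0$ and each carrying at least a fixed quantum of energy $\delta>0$; since the total energy is at most $2M_0$, we get $N \le 2M_0/\delta$, and hence $L \le N\,\ell_0 \le 2M_0\ell_0/\delta$. Tracking the explicit constants — $M_0$ from the disc comparison, $\ell_0$ from the period of the solution of (B1), and $\delta$ from Lemma \ref{bh02} applied to a single oscillation — should yield the stated numerical bound $L\le 146$. The main obstacle I anticipate is the quantitative control of the elliptic-function solution of the ODE (B1): one must show that $k$ cannot remain nearly constant over an arbitrarily long arc (which would make $\theta$ nearly linear and the branch arbitrarily long without extra energy cost), and this requires either a careful phase-plane analysis of $k'^2 = -\frac14 k^4 + 2k + 2C$ to bound the period and amplitude, or an argument that the constant solution $k\equiv 2^{1/3}$ (the equilibrium of (B1)) is excluded for a genuinely finite free branch with fixed endpoints. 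Handling this degenerate near-equilibrium case, and extracting clean numerical constants from the elliptic integrals, is where the computation will be delicate.
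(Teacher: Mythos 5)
Your proposal assembles the right raw ingredients (disc comparison for the energy bound, the optimality ODE (B1)--(B2), an appeal to the Appendix-type analysis of $P_C$), but the central counting mechanism is circular and would not close. The fixed energy quantum $\delta$ you want per oscillation is taken ``from Lemma \ref{bh02} applied to a single oscillation''; but Lemma \ref{bh02} gives a quantum $\eps^2/L$ with the \emph{total length} $L$ in the denominator. Thus your oscillation count is $N\le 2M_0L/\eps^2$, which grows linearly in the unknown $L$, and the concluding inequality reads $L\le N\ell_0\le (2M_0\ell_0/\eps^2)\,L$ --- vacuous as soon as the constant exceeds $1$. A length-independent quantum must instead come from the ODE itself (this is exactly (ODE4) in the Appendix: the energy on one period is at least $\frac{\pi}{4}\sqrt{22/3}$). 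Two further steps are unjustified: (i) evaluating (B3)--(B4) at the maximal-curvature point does \emph{not} ``pin down $C$ and bound $k_M$'' --- $C$ is a free constant of the branch and $k_M(C)\to\infty$ as $C\to\infty$, so any bound on $k_M$ requires first controlling $C$; (ii) your fixed oscillation scale $\ell_0$ requires a uniform upper bound on the period of solutions of (B1), which fails as $C$ approaches the critical value $-\frac34\,2^{1/3}$, where $P_C$ has a double root at $k=2^{1/3}$ and the period diverges. Note also that in this degenerate regime your worry is misstated: a branch lingering near the equilibrium $k\equiv 2^{1/3}$ is \emph{not} ``long without energy cost''; it pays $\frac12 2^{2/3}$ per unit length, so the energy bound caps its length directly.

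The paper's proof avoids all of this by a two-case split on $C$. For $C\le 1$, (ODE3) gives $k\le k_M(1)\le \frac73$, so (B3) confines the whole branch to the disc of radius $\sqrt{20/3}$ centered at $Q$; then the integration-by-parts and Cauchy--Schwarz mechanism of Lemma \ref{bhe20} (length $\le$ radius${}^2\times{}$energy, up to boundary terms) yields $L\le 90$. So, contrary to your opening claim, a geometric bound \emph{does} work once confinement is available, and it is precisely what handles the near-equilibrium/large-period case. For $C\ge 1$, the paper partitions the branch by curvature value into $I_+=\{k\ge 2^{1/3}\}$, $I_0=\{0<k<2^{1/3}\}$, $I_-=\{k\le 0\}$: the length of $I_+$ is bounded directly by the energy ($\frac12 k^2\ge \frac12 2^{2/3}$ there); the lengths of $I_0$ and $I_-$ are bounded using convexity of $k$ (from (B1), $k''\ge 0$ where $k\le 2^{1/3}$) together with the lower bound $|k'|\ge\sqrt{2C}\ge\sqrt2$ near $k=0$ and the bound $k_m^2\ge \frac{81}{100}$; the number of connected components of $I_0$ and $I_-$ is controlled by the fact that the branch contains at most $3$ periods, which follows from (ODE4) plus the disc comparison. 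Summing gives $8+6+132=146$. If you want to salvage your plan, replace the Lemma \ref{bh02} quantum by the per-period quantum of (ODE4), and add a separate argument (confinement via (B3), as above) for the regime where the period is large or $C$ is small.
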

\begin{proof}
We work with a free branch of $\g$ on $s \in (s_0,s_0+L)$ and use the  optimality conditions above.
We also know that the elastic energy of this branch is less than the total energy of the best disk $B$, so that
\begin{equation}\label{B4}
E(\g) \le E(\partial B)+A(B) =  3\pi 2^{-\frac 23}.
\end{equation}

We consider two cases. Assume first that $C \le 1$ on this branch ($C$ is defined above in (B2), (B3)). Then we know from (ODE3) in the Appendix that
\begin{equation}\label{seeODE3}
k(s) \le k_M(C) \le k_M(1) \le \frac 73
\end{equation}
Then, from (B3)
$$QM^2\le \frac{14}{3} +2= \frac{20}{3},$$
hence the arc is contained in the disc centered at $Q$ with radius $R_0=\sqrt{\frac {20}{3}}$.

On the other hand, if we put the origin at $Q$
$$L(\g)=L=\int_0^L x'^2+y'^2 dx = (xx'+yy')|_0^L - \int_0^L xx''+yy'' ds.$$
But $|x(L)x'(L)+y(L)y'(L)| \le R_0$ and $|x(0)x'(0)+y(0)y'(0)| \le R_0$ while by Cauchy-Schwarz and \eqref{B4}
$$|\int_0^L xx''+yy'' ds |\le R_0 \int_0^L |k| ds \le R_0 \sqrt{L2E(\g)}\le R_0 \sqrt{L 3\pi 2^{\frac 13}}.$$
Therefore, $L$ satisfies
\begin{equation}\label{B5}
L\le 2 \sqrt{\frac{20}{3}}+ \sqrt {\frac{20}{3}\times 3\pi\times 2^{\frac 13}} \sqrt{L},
\end{equation}
which implies (as soon as $C \le 1$)
\begin{equation}\label{B5b}
L\le 90.
\end{equation}

\medskip
Second case: $C\ge 1$ for this branch. In this case we have from (ODE3) in the Appendix
$$k_M(C)\ge k_M(1)\ge \frac 94,$$
$$k_m (C)\le k_m(1) \le - \frac {9}{10}.$$

We decompose the interval $I= (s_0, s_0+L)$ in three parts (some could
be empty), $I=I_-\cup I_0\cup I_+$ where
$$I_-=\{s \in I : k(s)\le 0 \} $$
$$I_0=\{s \in I : 0< k(s) < 2^{\frac 13} \} $$
$$I_+=\{s \in I : 2^{\frac 13}\le k(s) \} $$
and we are going to prove that the length of each part is uniformly bounded, by a controlled constant. First of all, we have seen that the integral of $k^2$ on a period satisfies (see ({ODE4}) in the Appendix)
$$\frac 12 \int_0^T k^2 ds\ge \frac{\pi}{4} \sqrt{\frac{22}{3}}.$$
Following \eqref{B4}, this implies that we can not have more than $3$ periods on each free branch.
We begin with  $I_+$. Obviously
$$E(\g) \ge \frac 12 \int_{I_+} k^2 ds \ge \frac 12 2^{\frac 23} |I_+|,$$
therefore
\begin{equation}\label{B7}
|I_+|\le 3\pi \times 2^{-\frac 23}\times 2^{\frac 13}\le 8.
\end{equation}
For $I_0$, we consider one of its connected components, say $(\alpha, \beta)$. Since $k_M(C)\ge \frac 94> 2^{\frac 13}$ and $k_m(C) \le - \frac {9}{10} <0$, we cannot have any local minimum or local maximum of $k$ in $I_0$ according to (ODE2) form the Appendix. Therefore, $k$ is either increasing from $k(\alpha)$ to $k(\beta)$, or decreasing from $k(\alpha)$ to $k(\beta)$. Moreover, there are at most $6$ such connected components because there are at most $3$ periods of $k$. Let us consider the case of $k$ increasing from $k(\alpha)$ to $k(\beta)$, the other one being similar. We have $0 \le k(\alpha)\le k(\beta) \le 2^{\frac 13}$. By (B1), $k'' \ge 0$ on $(\alpha, \beta)$, so that $k$ is convex. Therefore
\begin{equation}\label{B8}
k(\alpha)+k'(\alpha) (s-\alpha) \le k(s)
\end{equation}
which implies
$$k(\alpha)+k'(\alpha) (\beta-\alpha)\le k(\beta) \le 2^{\frac 13}.$$
Now $k(\alpha ) \ge 0$ and $k'(\alpha) = \sqrt{ 2C+2k(\alpha)-\frac 14 k^4(\alpha)  }\ge \sqrt{  2C }$ thus
$ \sqrt{2} (\beta-\alpha) \le \sqrt{  2C }  (\beta-\alpha) \le 2^{\frac 13}$ or $\beta-\alpha \le 2^{-\frac 16}$.

Since, there are at most $6$ such intervals, we have
\begin{equation}\label{B9}
|I_0| \le 6 \times 2^{-\frac 16} \le 6.
\end{equation}
At last we consider the case of $I_-$. The set $I_-$ is not empty only when $C>0$ and $k_m <0$. The set $I_-$ is  composed of connected components $[\alpha, \beta]$ such that $k(\alpha)=k(\beta)=0$ or is included in such connected components. Since we want to estimate from  above the length of $I_-$, it suffices to look for
the length of these connected components. There are at most $3$ of these (identical) components and $k(\frac{\alpha+\beta}{2})=k_m$ by symmetry.

Now, the elastic energy of such a component satisfies
\begin{equation}\label{B10}
E(\g_{\alpha_1, \beta_1}) = \frac 12 \int _{\alpha_1}^{\beta_1}k^2 ds = \int _{\frac{\alpha_1+\beta_1}{2}}^{\beta_1} k^2 ds = \int ^{\frac{\alpha_1+\beta_1}{2}}_{\alpha_1} k^2 ds.
\end{equation}
We denote $L_-= \beta_1-\alpha_1$ the length of this component. By convexity, on $(\alpha_1, \frac{\alpha_1+\beta_1}{2})$ we have
$$k(s) \le \frac{2k_m}{L_-}
(s-\alpha_1)\le 0,$$
thus
$$ E(\g_{\alpha_1, \beta_1})\ge \int_{\alpha_1}^{\alpha_1 + \frac{L_-}{2}}
\frac{4k_m^2}{L_-^2} (s-\alpha_1)^2 ds = \frac{k_m^2}{6}L_-.$$
Now, for $C\ge 1$ we have (see (ODE3) in the Appendix) $k_m^2 \ge k_m^2(1)\ge \frac{81}{100}$     and
$E(\g_{\alpha_1, \beta_1}) \le 3\pi 2^{-\frac{2}{3}}$. Therefore
$$ L_- \le \frac{600}{81} \times 3 \pi 2^{-\frac 23}\le 44,$$
and the total length of
\begin{equation}\label{B11}
|I_-|\le 3 L_-\le 132.
\end{equation}
In conclusion, for $C\ge 1$ the total length is less than (by gathering \eqref{B7}, \eqref{B9}, \eqref{B11})
$$L\le 132+8+6=146.$$
\end{proof}

\section{The optimal drop}\label{drops}

In this section we prove the existence of a best {\it drop} minimizing the sum of the elastic energy and the area enclosed. We introduce the class of  admissible {\it Jordan drops} consisting of simply connected open sets $\Om$ bounded by a Jordan curve $\g$ of finite length, which satisfies
$$ \theta (0)=\theta (L_\g)-\pi, \;\; E(\g)< +\infty, \;\;$$
where $L_\g$ is the length of $\g$.
\begin{figure}[ht]
\begin{center}
\includegraphics[width=6cm]{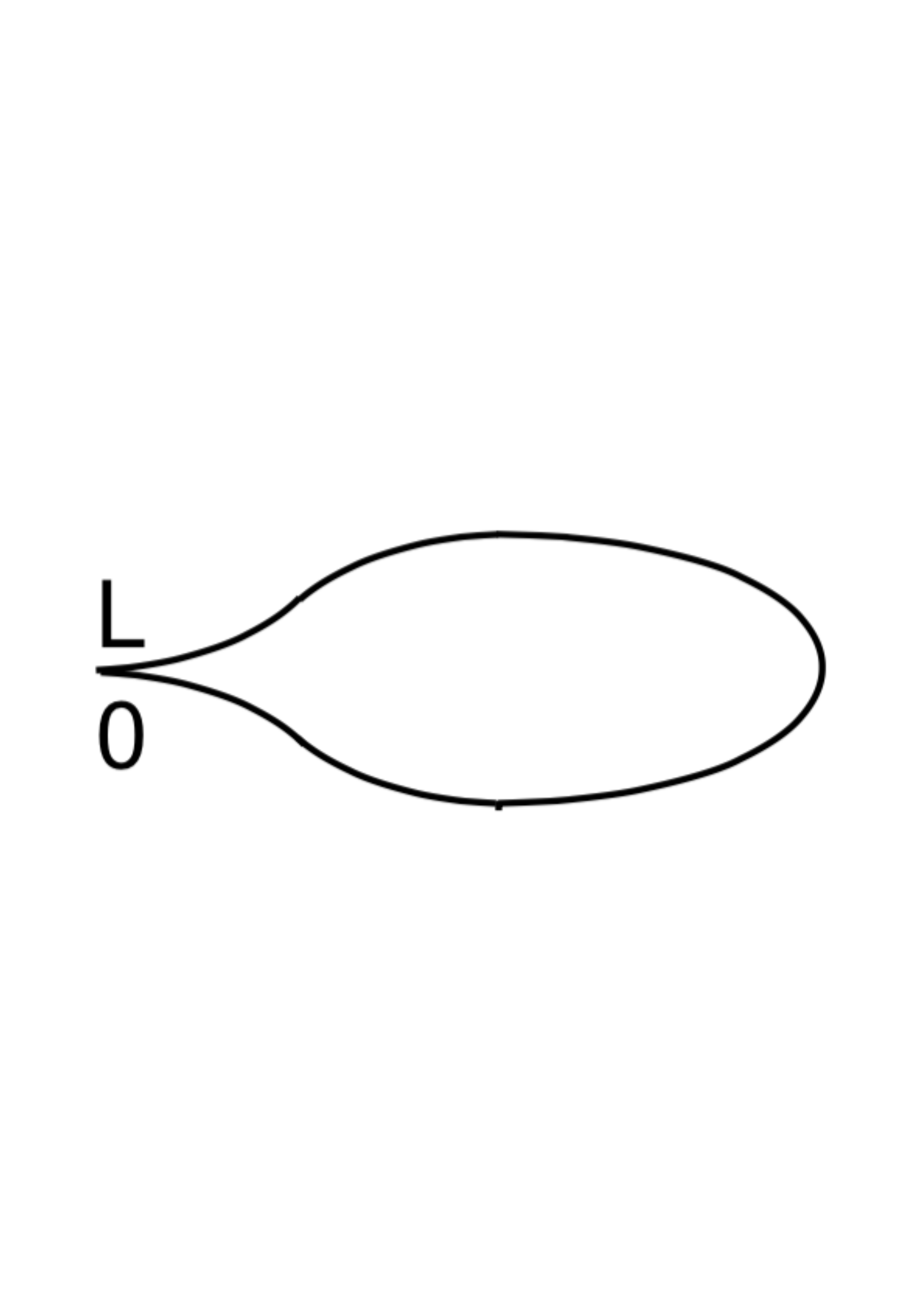}
\end{center}
\caption{A drop}\label{fig1nb}
\end{figure}
A drop will be denoted $(\Omega, \g)$, $\Omega$ being the open set enclosed by the Jordan curve $\g$ (all Jordan curves are oriented in the positive sense).

 For some $R>0$, we consider the problem
\begin{equation}\label{bh01}
\inf\{E(\g) +A(\Om) : (\Om,\g) \mbox{ is a drop}, \Om \sq B_R\}.
\end{equation}

Note that by a similar argument as in Lemma \ref{bhe20}, the length of Jordan drop $\g$ can not exceed   $8 R^2 E(\g)$. Indeed, the same argument works for the drop, if the singularity lies at the origin, we have $x^2+y^2 \le 4R^2$ since the diameter of the drop is less than $2R$.

Here is the main result.
\begin{thm}\label{bh03}
Problem \eqref{bh01} has at least one solution.
\end{thm}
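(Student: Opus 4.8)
The plan is to apply the direct method of the calculus of variations. The only genuinely delicate points are the control of the length of a minimizing sequence and the admissibility (simplicity) of the limiting curve, so I would organize the proof around these. First I would fix a minimizing sequence $(\Om_n,\g_n)$ for \eqref{bh01}. The infimum $m$ is finite (any fixed drop in $B_R$ is a competitor) and $E(\g_n)\le M$ for some $M$, since $E(\g_n)+A(\Om_n)\to m$ and $A\ge 0$. The key a priori estimate is a two-sided length bound: the drop analogue of Lemma \ref{bhe20} gives $L_{\g_n}\le 8R^2E(\g_n)\le 8R^2M$, while applying Lemma \ref{bh02} to the endpoints of the drop, where the corner condition forces $|\theta_n(L_{\g_n})-\theta_n(0)|=\pi$, yields $2E(\g_n)\ge \pi^2/L_{\g_n}$, hence $L_{\g_n}\ge \pi^2/(2M)$. (Incidentally this also shows $E(\g_n)\ge \pi/(4R)$, so nothing collapses.) Thus the lengths neither blow up nor shrink to zero.

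Next I would reparametrize each $\g_n$ by arc length on $[0,L_n]$ and transport the tangent angle to the fixed interval $[0,1]$ by setting $\phi_n(t)=\theta_n(L_n t)$. A direct computation gives $\int_0^1 |\phi_n'|^2\,dt = 2L_n E(\g_n)\le 16R^2M^2$, so $(\phi_n)$ is bounded in $H^1(0,1)$. After normalizing by a rigid motion so that $\g_n(0)\in \ov{B_R}$ and $\theta_n(0)\in[0,2\pi)$, I would extract a subsequence with $\phi_n\rau\phi$ weakly in $H^1(0,1)$ (hence uniformly on $[0,1]$ by the compact embedding into $C^0$), $L_n\to L^*\in[\pi^2/(2M),8R^2M]$, and $\g_n(0)\to P^*\in\ov{B_R}$. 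I reconstruct the limit curve by $\theta^*(s)=\phi(s/L^*)$ on $[0,L^*]$ and $\g^*(s)=P^*+\int_0^s(\cos\theta^*,\sin\theta^*)\,du$; the rescaled curves converge uniformly to $\g^*$. All the constraints pass to the limit by uniform convergence: the closing relations $\int\cos\theta_n=\int\sin\theta_n=0$, the corner condition $\phi_n(1)=\phi_n(0)+\pi$, and the containment $\g_n\sq B_R$ (giving $\g^*\sq\ov{B_R}$). For the functionals, weak $H^1$ lower semicontinuity of the seminorm gives $E(\g^*)=\tfrac{1}{2L^*}\int_0^1|\phi'|^2\le\liminf E(\g_n)$, and the area is continuous: writing $A(\Om_n)=a(\theta_n)=L_n^2\iint_{T_1}\cos\phi_n(u)\sin\phi_n(s)\,du\,ds$ over the fixed triangle $T_1=\{0\le u\le s\le 1\}$, the uniform convergence of the integrand and of $L_n$ gives $A(\Om_n)\to a(\theta^*)$. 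Combining, $E(\g^*)+a(\theta^*)\le m$.

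The main obstacle is the final admissibility check, namely that $\g^*$ is a genuine Jordan drop, since a uniform limit of simple closed curves need not remain simple. I would argue this in two stages. First, transversal self-crossings cannot appear in the limit: a transversal crossing of $\g^*$ would, by the uniform convergence together with the local-graph control of Lemma \ref{bh04} (which provides a uniform modulus of straightness on intervals of length $l(M)$), force $\g_n$ to cross itself for large $n$, contradicting that each $\g_n$ is a Jordan curve. Hence $\g^*$ is weakly simple, its only possible failures of injectivity being self-touchings. Second, I would remove any such self-touching by surgery: at a point visited twice, interchanging the two sub-branches splits $\g^*$ into loops; discarding the superfluous loop and keeping the component that satisfies the corner condition produces an admissible Jordan drop whose energy is no larger (the splitting does not create curvature) and whose enclosed area is no larger. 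The resulting drop therefore has value $\le m$, so it is a minimizer and $a(\theta^*)$ coincides with the geometric area $A(\Om^*)$. This surgery argument, together with verifying that it respects the corner and containment constraints, is where the care is required; everything preceding it is the routine compactness–semicontinuity machinery.
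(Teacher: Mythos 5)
The first two-thirds of your argument (the two-sided length bound, reparametrization to a fixed interval, weak $H^1$ compactness of the tangent angle, lower semicontinuity of $E$ and continuity of the area functional, exclusion of transversal crossings via Lemma \ref{bh04}) is sound and essentially coincides with the paper's proof. The genuine gap is in your last step, the ``surgery'' on the limit curve, and it cannot be repaired at the level of the limit alone. The paper's key idea, which your proposal is missing, is a surgery performed \emph{along the minimizing sequence}: if a drop admits parameters $s<t$ with $\theta(t)=\theta(s)-\pi-\vps$, then a sub-arc (a graph, by Lemma \ref{bh04}) can be translated until it touches the remaining curve, producing a new admissible drop whose area is no larger and whose elastic energy is smaller by the \emph{fixed} amount $\vps^2/(4L)$. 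Applied to a minimizing sequence, this forbids such angle defects asymptotically and yields, in the limit, the inequality $\theta(t)\ge\theta(s)-\pi$ for all $s<t$ (inequality \eqref{bh06} of the paper). This inequality is precisely what rules out the dangerous self-touchings (the paper's Cases 1 and 2) and leaves only the ``disconnecting'' touchings of Case 3, for which an innermost sub-loop is a genuine Jordan drop enclosing area contained in $\Om$.

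Without \eqref{bh06}, the uniform limit of a bounded-energy sequence of Jordan drops can be, for instance, a rolled-up spiral whose consecutive layers touch tangentially: it is weakly simple, satisfies the corner condition, the containment, and every constraint you verify, yet your recipe ``split at a point visited twice, discard the superfluous loop, keep the component satisfying the corner condition'' breaks down on it. At a touching where the two tangents are \emph{opposite}, the component containing the original corner acquires a second corner at the junction, so it is not an admissible drop; the complementary component does have exactly one corner, but in general it still has self-touchings (in the spiral every sub-loop except the innermost one does), and nothing in your argument shows that its enclosed region is contained in $\Om$, i.e.\ that the area does not increase --- sub-loops of a weakly simple curve can a priori wind negatively around regions outside $\Om$. (When the tangents at the touching agree, the bookkeeping is reversed, a case you do not distinguish; moreover such same-orientation touchings force at least three branches through the point, which destroys any naive pairing of touchings.) Repairing this requires knowing that the touchings are nested, $s_\alpha<s_\beta \Rightarrow t_\beta<t_\alpha$, so that an innermost clean touching $(s^*,t^*)$ exists, and that the area enclosed by $\g|_{[s^*,t^*]}$ is entirely part of $\Om$; in the paper both facts come out of the case analysis, which in turn rests on \eqref{bh06}, hence on the sequence-level surgery. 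In short: you located the right difficulty, but the assertions ``the kept component is an admissible Jordan drop'' and ``its enclosed area is no larger'' are exactly what must be proved, and they are provable only because one can first improve the minimizing sequence itself.
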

\begin{rem} {\rm
With no assumptions on the radius $R$, it could be possible that the optimal drop $(\Om,\g)$ touches the boundary of the ball but it may not have self intersections.
}
\end{rem}
\begin{proof}
For simplicity of the notation, the ball $B_R$ will be denoted $B$ and the area of $\Om$ will be denoted by $|\Om|$. We start with the following.
\begin{lem}
Let $(\Om,\g)$ be a drop contained in $B$. If for some $\vps >0$ there exists $0\le s<t \le L_\g$ with
$$\theta(t)=\theta (s)-\pi-\vps$$
then there exists a new drop $(\tilde \Om, \tilde \g)$ in $B$ such that
$$\int_{\tilde \g} |\tilde \theta '|^2 \le \int_{ \g} |\theta '|^2 -\frac{\vps ^2}{2L_\g} \;\;and \;\;|\tilde \Om|\le |\Om|.$$
\end{lem}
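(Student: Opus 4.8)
The plan is to remove the ``excess turning'' recorded by the hypothesis through a cut-and-reflect surgery, keeping the curve closed with a single cusp, not increasing the area, and releasing at least $\vps^2/(2L_\g)$ of elastic energy. First I would locate clean cut points. Since $\theta$ is continuous and $\theta(t)-\theta(s)=-\pi-\vps<-\pi$, the intermediate value theorem produces a point $t_0\in(s,t)$ with $\theta(t_0)=\theta(s)-\pi$, so the directed tangents at $\g(s)$ and $\g(t_0)$ are exactly antiparallel and the tangent lines are parallel; the residual overshoot $\vps$ then lives on $[t_0,t]$. These are the points I would feed into the surgery, using $s,t_0$ for the geometric fold and the leftover $\vps$ on $[t_0,t]$ for the quantitative energy gain.

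The surgery I would attempt is a reflection of the sub-arc $\g|_{[s,t_0]}$ across the chord joining $\g(s)$ to $\g(t_0)$. The point of reflecting across this chord rather than across any other line is that it fixes the two cut points, so the surgered curve $\tilde\g$ stays connected and closed and the original cusp at $0\equiv L_\g$ is left untouched; moreover reflection is an isometry, so it preserves arc length and pointwise curvature. The area changes by twice the signed area of the cap bounded by $\g|_{[s,t_0]}$ and the chord, so the orientation of the fold must be chosen so that the reflected cap is swept into $\Om$, giving $|\tilde\Om|\le|\Om|$; one then checks $\tilde\Om\sq B$, for instance by verifying the reflected arc stays inside the ball. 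The delicate point already here is that a bare chord reflection generally produces angular jumps at $\g(s)$ and $\g(t_0)$ unless the chord is parallel to the (common) tangent direction there, and a jump in $\theta$ destroys the $H^1$ (finite-energy) character of $\tilde\g$. I would therefore either select the cut points to lie on a common tangent (bitangent) line so that the fold glues smoothly, or absorb the mismatch into the cusp structure, reorganizing the two antiparallel branches so that $\tilde\g$ remains smooth except at one point with opposite tangents, i.e.\ a genuine drop.

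Once a smooth, simple surgered drop is in hand, the energy accounting should go exactly as in Lemma~\ref{bh02}. The surgery eliminates a piece of boundary over which the tangent angle sweeps through the extra amount $\ge\vps$ beyond the $\pi$ needed to close a drop; since this sweep is realized over an arc of length at most $L_\g$, Cauchy--Schwarz in the form of Lemma~\ref{bh02} shows it carried energy at least $\vps^2/L_\g$, of which the construction recovers at least one half, yielding the stated bound $\int_{\tilde\g}|\tilde\theta'|^2\le\int_\g|\theta'|^2-\vps^2/(2L_\g)$. The main obstacle, as flagged above, is entirely geometric and not the energy estimate: guaranteeing that the folded curve is still a \emph{Jordan} drop (no self-intersections are created, exactly one cusp, no spurious corners) with $|\tilde\Om|\le|\Om|$ and $\tilde\Om\sq B$. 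I expect this simplicity-and-area bookkeeping, together with the correct choice of reflection line ensuring smoothness, to be where almost all of the work lies, while the constant $\tfrac12$ falls out of the Lemma~\ref{bh02} estimate.
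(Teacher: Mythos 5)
Your surgery cannot yield the stated energy decrease, because the operation you propose is exactly energy-neutral: a reflection of the plane is an isometry, so replacing $\g|_{[s,t_0]}$ by its mirror image across the chord preserves arc length and $|\theta'|$ pointwise --- as you yourself note --- and no arc of the curve is discarded. Hence for your surgered curve $\int_{\tilde\g}|\tilde\theta'|^2=\int_\g|\theta'|^2$, and the gain $\vps^2/(2L_\g)$ is unobtainable. Your last paragraph asserts that ``the surgery eliminates a piece of boundary over which the tangent angle sweeps through the extra amount $\ge\vps$'', but this contradicts your own description of the surgery: nothing is eliminated, an arc is merely reflected. This is the fatal gap, and it is independent of the smoothness issue you flag; that issue is real too (a chord reflection generically creates two angle jumps, so $\theta\notin H^1$ and the energy is not even defined), and moreover ``choosing the orientation of the fold'' is not a freedom you have --- reflection across a given chord is one determined map, so you cannot decide on which side the cap lands, nor therefore arrange $|\tilde\Om|\le|\Om|$.

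The paper's proof is built so that arcs really are thrown away. From $\theta(t)=\theta(s)-\pi-\vps$ one first picks $s<\ov s<\ov t<t$ with $\theta(\ov s)=\theta(s)-\vps/2$ and $\theta(\ov t)=\theta(t)+\vps/2$, so the tangents at $\ov s,\ov t$ are exactly antiparallel \emph{with an $\vps/2$ angular margin on each side}; then a sub-arc $[s',t']\sq[\ov s,\ov t]$, still with antiparallel end tangents, on which $\theta$ stays between its endpoint values, so that $\g|_{[s',t']}$ is a graph in the direction $\theta(s')$. This cap is translated parallel to $\theta(s')$ until it first touches the rest of the curve; the first contact is tangential with opposite orientations and becomes the cusp of a new drop $\tilde\g$, assembled from a sub-arc of the translated cap and a sub-arc of the original curve (joined, if needed, by a straight segment along the translation direction, which is tangent at both ends and carries zero elastic energy). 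Being made of sub-arcs of $\g$, the new drop is automatically simple, encloses a region contained in $\Om\sq B$, and --- this is the whole point --- omits the marginal arcs on which $\theta$ varies by $\vps/2$; by Lemma \ref{bh02} each such arc carries $\int|\theta'|^2\ge\vps^2/(4L_\g)$, which is where the decrease comes from. So the energy gain is produced by \emph{deleting} the overshoot arcs, and the translation-until-contact device is precisely what allows one to delete them while keeping a closed, simple drop; your reflection neither deletes anything nor controls simplicity, so it misses both halves of the argument.
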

 \noindent [Proof of the Lemma] Assume $s$ and $t$ satisfy the hypotheses. Then, from continuity of $\theta$, there exists
$s<\ov s<\ov t < t$ such that
$$\theta (\ov s)=\theta (s) -\frac{\vps}{2} \;\;and \;\;\theta (\ov t)=\theta (t) +\frac{\vps}{2}.$$
Moreover, there exists
$\ov s\le s'<t'\le \ov t$ such that
$$\theta (t')=\theta (\ov t), \theta (s') =\theta (\ov s)$$ and for every $u \in(s',t')$
$$\theta (u) \in (\theta (t'),\theta (s')).$$
Indeed, we define
$$t'=\inf \{ t : t>\ov s, \theta (t)=\theta (\ov t)\},$$
and
$$s'=\sup \{ s : s < t', \theta (s)=\theta (\ov s)\}.$$
Then we notice that the curve $\g_{[s',t']}$ is a graph in the direction $\theta (s')$, otherwise it would contradict the choice of $s'$ and $t'$.
\begin{figure}[ht]
\begin{center}
\includegraphics[width=8cm]{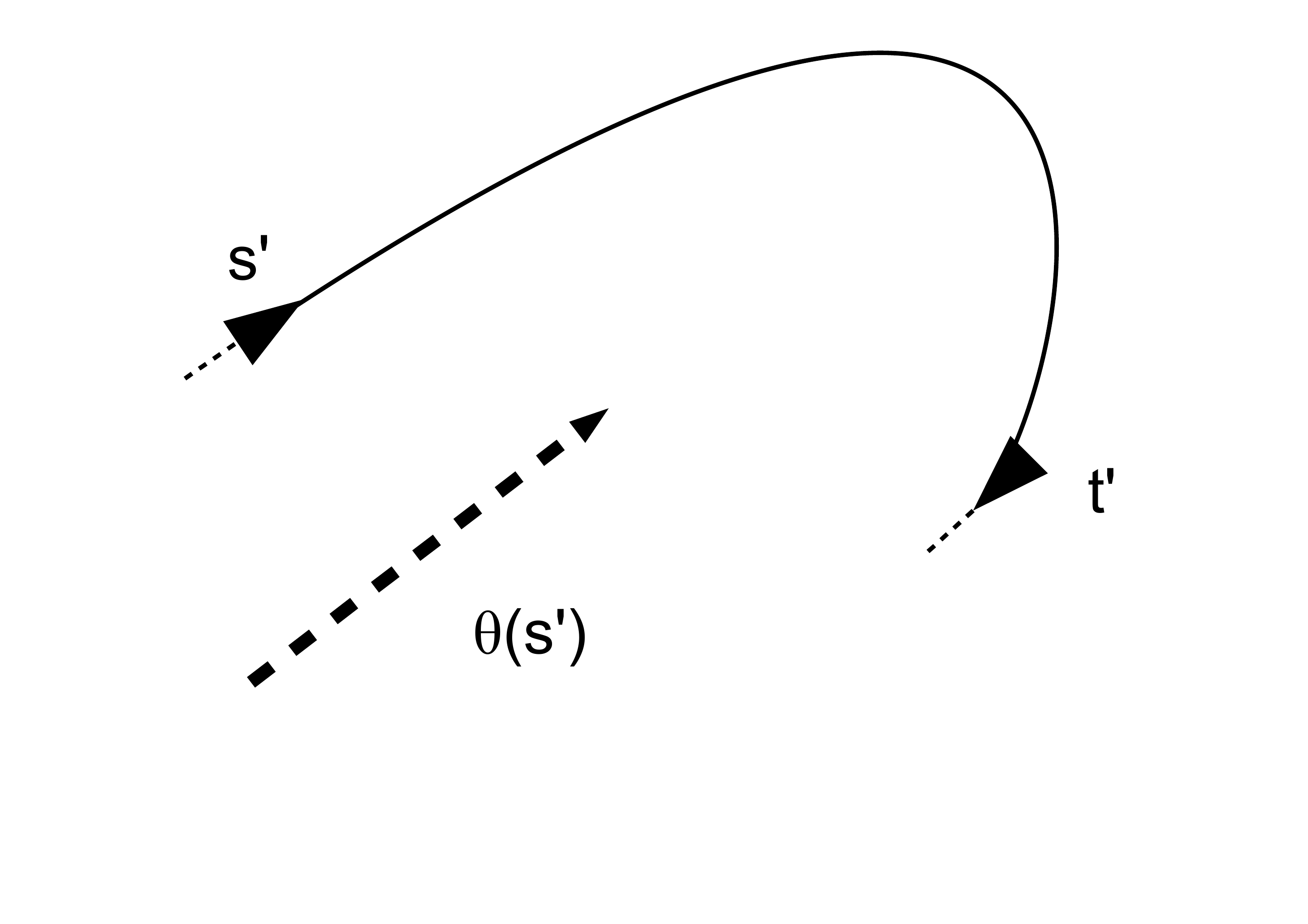}
\end{center}
\caption{The curve is a graph in the direction $\theta (s')$}\label{fig1na}
\end{figure}
Setting the orientation of the curve in the trigonometric sense, we are in  configuration similar to Figure \ref{bh07}.
\begin{figure}[ht]
\begin{center}
\includegraphics[width=8cm]{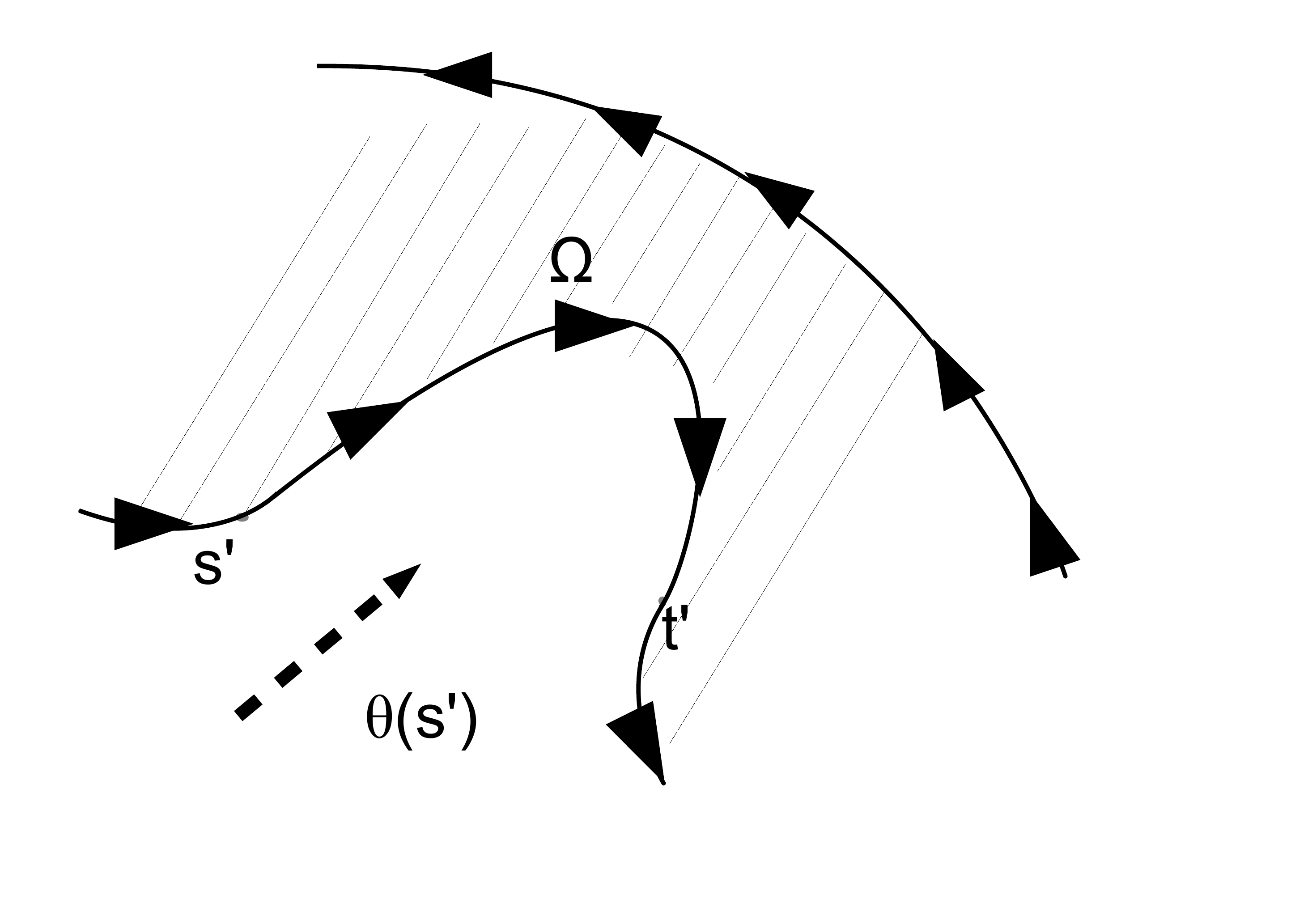}
\includegraphics[width=8cm]{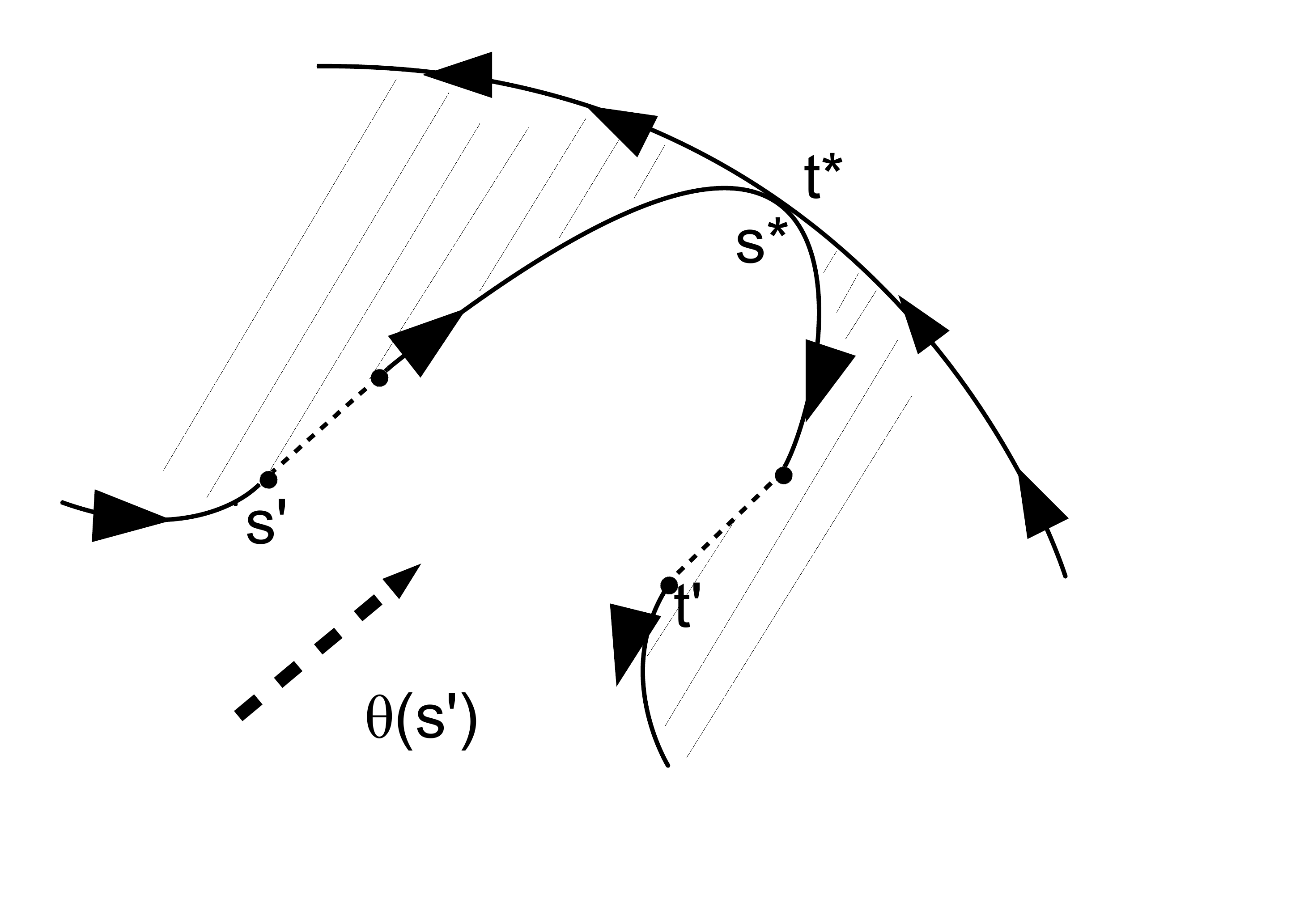}
\end{center}
\caption{Translation of  $\g|_{[s',t']}$ in the direction $\theta (s')$}\label{bh07}
\end{figure}
Using the graph property, we can translate continuously the piece of the curve $\g|_{[s',t']}$ in  a parallel way in the direction $\theta (s')$ until this piece touches again $\g$.

We denote $s_\alpha \in [s',t']$ and $t_\alpha\in [0,L]\sm [s',t']$ the couples of touching points. We denote $s_1$, respectively $s_2$, the minimal and maximal values of $s_\alpha$. Then, one of the curves starting with $s_2$ and ending in $t_2$, or starting in $t_1$ and ending in $s_1$ is a drop. Precisely, it is the one which does not contain the point $\g(0)$. Without loosing generality we can assume it is the curve $s_2\rightarrow t_2$ and rename the point $(s_2,t_2)=(s^*,t^*)$ and denote this curve $\tilde \g$. We notice that $\tilde g$ can not touch any the piece of curve $\g|_{[\ov s, s']}$. If there would be a contact point, this contact is generated by the translation of $\g|_{[ s', t']}$ and has to be precisely $(s^*, t^*)$. But in this case, $t^*$ lies in the interval $[\ov t, s']$, so the curve starting at $t^*$ and ending at $s^*$ is a drop, which does not touch the piece of curve $\g|_{[t', \ov t]}$.

In this way, we built a new drop $(\tilde \Om, \tilde \g)$, which encloses a domain contained in $\Om$ and, in view of Lemma \ref{bh02} has an elastic energy smaller by at least an increment of $\frac{\vps^2}{4L_\g}$.

\noindent [Proof of the Theorem \ref{bh03} (continuation)] Coming back to the proof of Theorem \ref{bh03}, les us consider $(\Om_n, \g_n)$ be a minimizing sequence of drops. We may assume that
$E(\g_n)$, $|\Om_n|$  and $L_{\g_n}$ are convergent. Assume that for every $n$ we have $L_{\g_n} \le L^*$. In order to work on a fixed Sobolev space  $H^1(0,L^*)$, we assume that $\theta_n$ is formally extended by the constant $\theta _n (L_{\g_n})$ on $(L_{\g_n}, L^*]$. Up to a subsequence, we can assume that $\theta_n$ converges uniformly on $[0,L^*]$ to some function $\theta$. We define the limit curve $\g$ in the following way: $L_\g =\lim_\nif L_{\g_n}$ and
$\g: [0, L_\g]\ra \R^2$, $\g(s) =\int_0^s e^{i\theta(s)} ds + a$, where $a =\lim_\nif \g_n(0)$.

 Let us fix $\vps >0$. Then, from the previous lemma, for every $s<t$ and $n$ large enough we have
$$\theta_n(t)\ge \theta_n(s)-\pi-\vps.$$
 Indeed, otherwise we would replace $(\Om_n,\g_n)$ by $(\tilde \Om_n, \tilde g_n)$ decreasing the energy by a fixed increment  $\frac{\vps^2}{4L^*}$, where $L^*$ is a bound of the lengths. This is in contradiction with the minimality of the sequence.

 In particular, passing to the limit we get that for every $\vps >0$ and  for every $s<t$
$$\theta(t)\ge \theta(s)-\pi-\vps.$$
Since $\vps$ is arbitrary, we get
\begin{equation}\label{bh06}
\theta(t)\ge \theta(s)-\pi.
\end{equation}
From the compactness of the class of closed subsets of $\ov B_R$ endowed with the Hausdorff metric, and the embedding of $H^1(0,L^*)$ into $C^{0,\alpha} [0,L^*]$, we may assume that for some open set $\Om \sq B_R$
 $$\Om_n^c \sr{H}{\lra} \Om^c,$$
 and the convergence of $\theta_n$ leads to
 $$\g_n([0,L_{\g_n}]) \sr{H}{\lra} \g([0,L_\g]).$$
 We refer to \cite{bubu05} or \cite{HP} for precise properties of the Hausdorff convergence.
 We know that in general $\ds 1_\Om\le \liminf_\nif 1_{\Om_n}$, so that $\ds |\Om| \le \lim_\nif |\Om_n|$. Nevertheless, in our situation the perimeters being uniformly bounded, we get $1_{\Om_n}\ra 1_\Om$ in $L^1(B_R)$. Moreover, $\partial \Om\sq \g ([0, L_\g])$ and  $\Om$ is simply connected (i.e. any loop contained in $\Om$ is homotopic to a point in $\Om$), but not necessarily connected. The curve $\g$ is possibly self-intersecting, but not crossing, i.e.  at every self-interesting point, the tangent line is the same, while looking locally around the point, the pieces of curve passing through it are  (in view of Lemma \ref{bh04}) graphs of functions. From the simple connectedness hypothesis, these functions are necessarily ordered. From Lemma \ref{bh02} and the fact that the elastic energy is finite, the number of pieces of curve passing through the touching point is uniformly finite.
The situation displayed in Figure \ref{fig6c} may occur.
\begin{figure}[ht]
\begin{center}
\includegraphics[width=8cm]{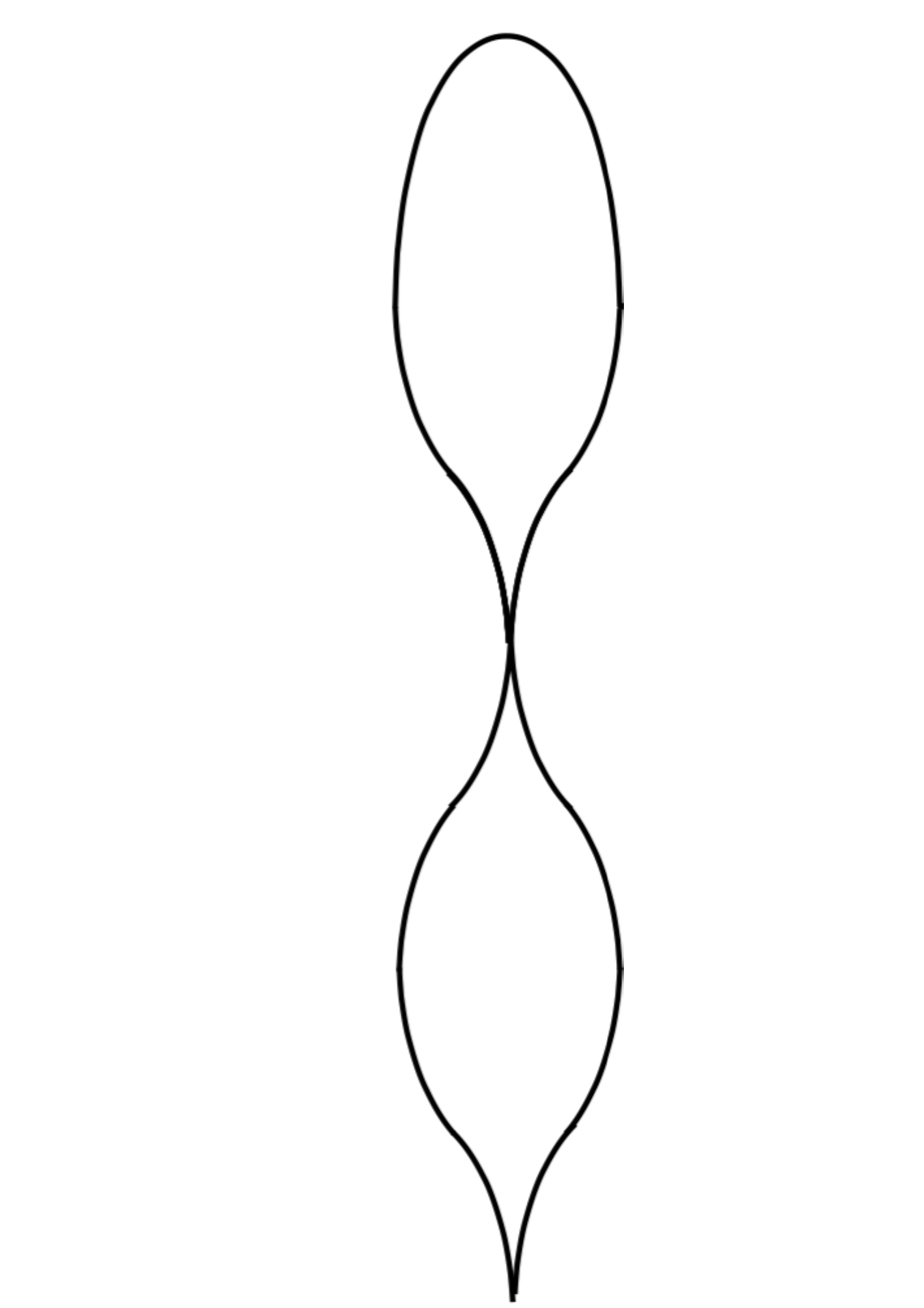}
\end{center}
\caption{Self touching curve, disconnecting the limit}\label{fig6c}
\end{figure}

We shall prove that $\g$ can not have self intersection points, other that the type above. The key ingredients are the local representation of the curve as a graph and  inequality \eqref{bh06}.
We shall analyze the different contact types between  two pieces of $\g$. Since the curves are graphs on an interval $[-\frac{l}{\sqrt{2}}, \frac{l}{\sqrt{2}} ]$, and the representing functions are ordered, we shall look to the orientation of each piece of curve.

\medskip
\noindent {\bf Case 1. Opposite orientation, not disconnecting.}
\begin{figure}[ht]
\begin{center}
\includegraphics[width=8cm]{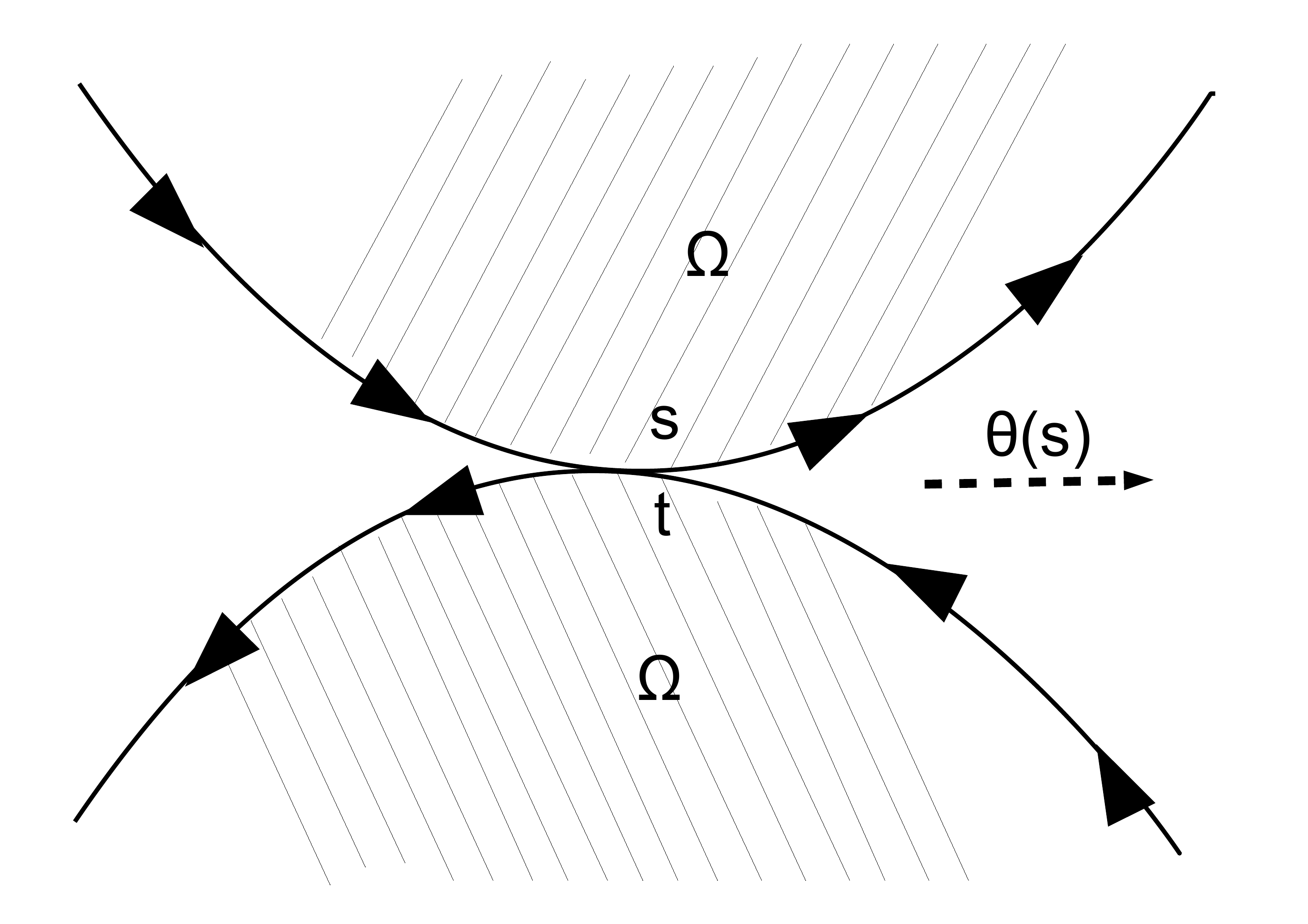}
\end{center}
\caption{Case 1: opposite orientation, not disconnecting.}
\end{figure}
Two branches of $\g$ touching at some point $\g(s)=\g(t)$, are represented as graphs of the functions $g_s,g_t$, on $[-\frac{l}{\sqrt{2}}, \frac{l}{\sqrt{2}} ]$. We assume that $g_s(0)=\g(s)=\g(t)=g_t(0)$ and  choose the couple $(s,t)$ such that for some $\vps >0$ we have
$$\forall u \in (0, \vps)\;\; g_s(u) >g_t(u),$$
otherwise we change the contact point. This inequality would imply the existence of points $s'>s$ and $t'<t$ such that $\theta (t') < \theta (s')-\pi$, which is in contradiction with \eqref{bh06}, so that this situation can not occur.

\medskip

\noindent {\bf Case 2. Contact of two branches of the same orientation.}
\begin{figure}[ht]
\begin{center}
\includegraphics[width=8cm]{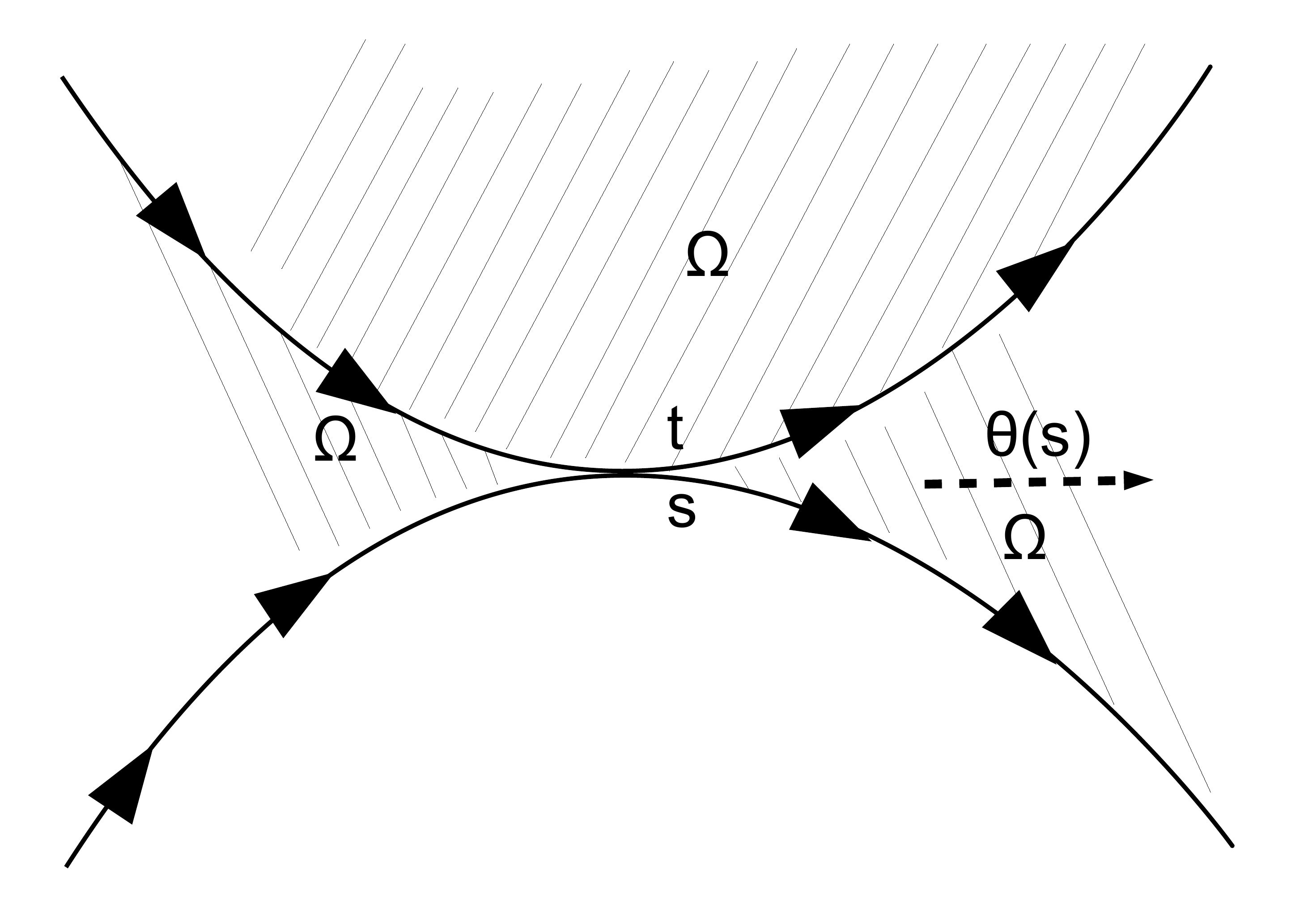}
\end{center}
\caption{Case 2: same orientation}
\end{figure}
From the simple connectedness, this situation implies that the touching point $\g(s)$ belongs to at least three branches, in particular between the graphs of  $g_s$ and $g_t$, there is a graph corresponding to piece of curve with opposite orientation. There are two possibilities: either this new contact corresponds to a point $t'\in (t,L)$ or to $s'\in (0,s)$. The first situation is in fact the case 1 between the contact points $s$ and $t'$. The second situation  leads also  to Case 1, but for the contact points $s'$ and $t'$,  so we conclude that the second case can not hold.

\medskip
\noindent {\bf Case 3. Opposite orientation,  disconnecting.}
\begin{figure}[ht]
\begin{center}
\includegraphics[width=8cm]{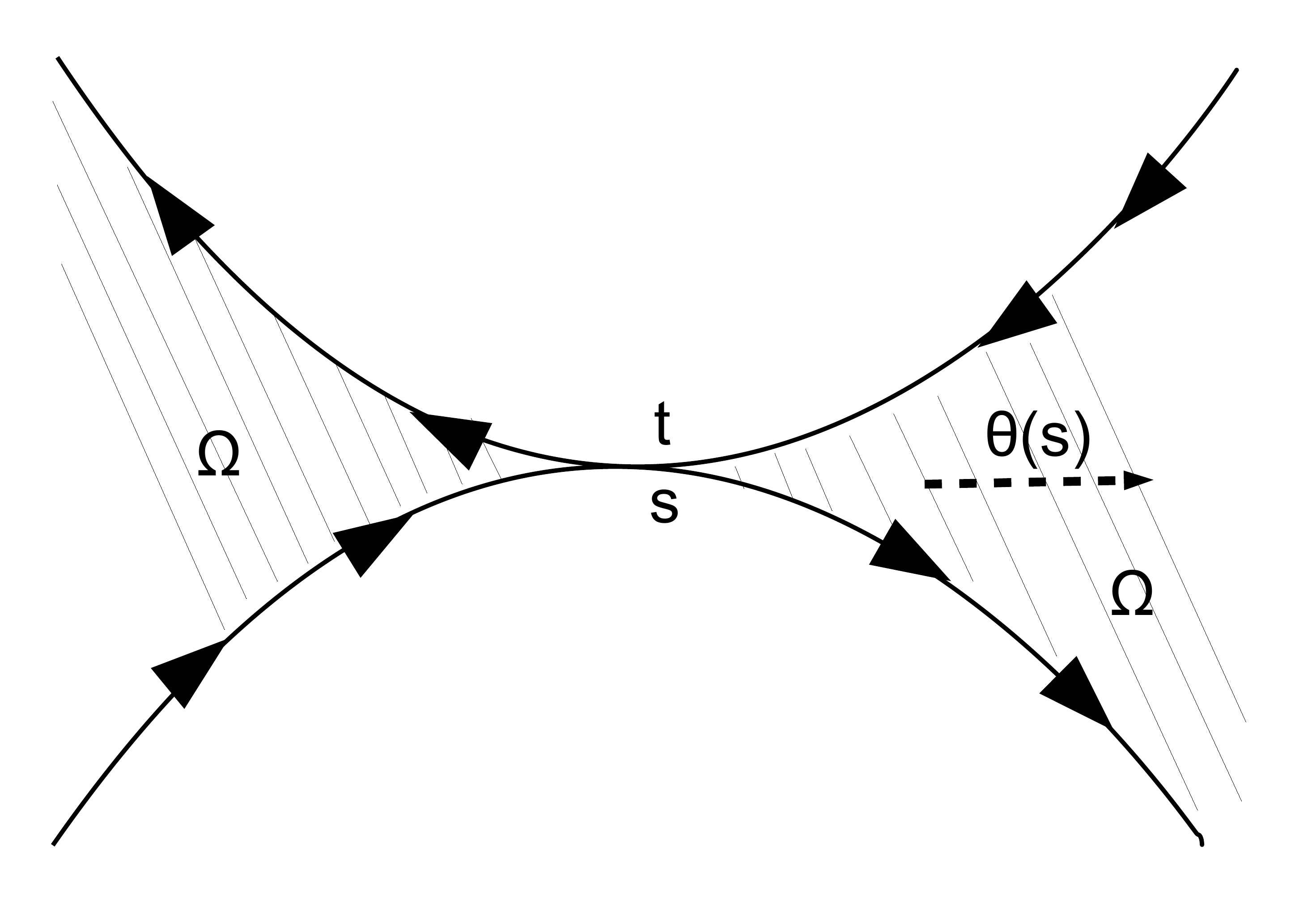}
\end{center}
\caption{Case 3: simple touch, disconnecting}
\end{figure}
This is the only remaining possibility for self-intersections. There may be several contact points, but every contact point is simple, otherwise we would fall in Case 2. So let us denote $\{(s_\alpha, t_\alpha)\}_\alpha$ the couple of parameters corresponding to the contact points. Because of the simple connectedness and of the absence of contact poins as in cases 1 and 2, we have that if $s_\alpha <s_\beta$ then
$t_\beta < t_\alpha$. Consequently, we can identify the contact point $(s^*,t^*)$ such that between  $s^*$ and $t^*$ there is no other contact, by setting $s^*=\sup_\alpha s_\alpha$ and $t^*=\inf_\alpha t_\alpha$. Of course, $s^* $ and $t^*$ can not collapse. Indeed,  in view of Lemma \ref{bh02} applied to $\g|_{[s_\alpha,t_\alpha]}$ if collapse occurs then the elastic energy would blow up. So $\g|_{[s^*,t^*]}$ is a Jordan curve for which all the area enclosed is part of $\Omega$, otherwise, because of the simple connectedness, a branch of the curve must pass through the contact point, bringing it to the case 2.

So $\g|_{[s^*,t^*]}$ is a drop, with lower elastic energy than $\g$ and enclosing a surface less than or equal to $|\Omega|$. This means that $\g|_{[s^*,t^*]}$ is a solution for problem \eqref{bh01}.

\end{proof}
\begin{lem}\label{bhe02}
There exists $R_0$, such that if the radius $R$ of the ball $B_R$ in Theorem \ref{bh03} satisfies $R \ge R_0$, then there exists a translation of the optimal drop which does not touch the boundary of $B_R$.
\end{lem}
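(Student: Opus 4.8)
The goal is to bound the diameter of the optimal drop by a constant $D_0$ that does not depend on $R$; granting this, for any $R>D_0$ one translates a single point of the drop to the centre of $B_R$, so that the whole translated drop lies in $\overline{B_{D_0}}\subset\subset B_R$ and hence misses $\partial B_R$. The preliminary input is a uniform energy bound: fixing once and for all a single admissible teardrop $(\Om_0,\g_0)$ of diameter $d_0$, for every $R>d_0$ this drop (suitably translated) competes in \eqref{bh01}, so the optimal drop $(\Om,\g)$ satisfies
\[
E(\g)+A(\Om)\le E(\g_0)+A(\Om_0)=:M_0,
\]
a constant independent of $R$; in particular $E(\g)\le M_0$.

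Next I would show that $\g$ cannot coincide with $\partial B_R$ along an arc of positive length. On such an arc one has $k\equiv 1/R$ and $k''\equiv 0$, so by the shape-derivative computation in the proof of Theorem \ref{opti} the first variation of $E+A$ under a normal field $V$ has density $\bigl(1-\tfrac12 k^3-k''\bigr)\langle V,\nu\rangle=\bigl(1-\tfrac{1}{2R^3}\bigr)\langle V,\nu\rangle$. Since $1-\tfrac{1}{2R^3}>0$, a compactly supported inward perturbation (which keeps the curve inside $B_R$ and preserves the drop structure) strictly decreases $E+A$, contradicting minimality. Hence $\g\cap\partial B_R$ consists of isolated points only.

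The step I expect to be the main obstacle is upgrading this to the assertion that the optimality conditions hold on the \emph{whole} boundary. Because $\g$ is a Jordan curve (Theorem \ref{bh03}), its smooth part (everything but the cusp) is a single free branch: the isolated contacts with $\partial B_R$ are not self-intersections and do not split it. On each open arc between two consecutive contacts the curve is strictly interior, so unconstrained perturbations are admissible and conditions (B1)--(B4) of Theorem \ref{opti} hold there. Since $\g$ is smooth across each contact point, $k$ and $k'$ are continuous there, so the first integral constant $C=\tfrac12 k'^2+\tfrac18 k^4-k$ and the centre $Q$ of (B3) take the same value on both sides; by analytic continuation of the autonomous ODE (B1), conditions (B1)--(B4) hold on the entire smooth boundary with one and the same pair $(Q,C)$. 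The delicate point is precisely that a contact does not activate a binding constraint: the interior optimality forces the normal density $1-\tfrac12 k^3-k''$ to vanish along the branch, so at a contact point there is no outward-pointing force, consistent with a non-binding touch.

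With the optimality conditions available on the full loop, the argument of Lemma \ref{be03} (run with $M_0$ in place of $3\pi 2^{-2/3}$) bounds the total length of $\g$ by a constant $L_0=L_0(M_0)$ independent of $R$. As $\g$ is a closed loop, its diameter is at most $L_0/2=:D_0$, and setting $R_0:=D_0+1$, for every $R\ge R_0$ the translation described above places the drop in $\overline{B_{D_0}}\subset\subset B_R$, finishing the proof. In summary, the only nonroutine ingredients are the exclusion of contact arcs and the verification that the isolated contacts neither break the free branch nor bind the constraint, which together make the uniform length estimate of Lemma \ref{be03} applicable globally.
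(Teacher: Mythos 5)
Your strategy (uniform energy bound, then a diameter bound for the optimal drop independent of $R$, then translation to the centre) is genuinely different from the paper's, and it breaks down at exactly the step you flag as delicate. The gap is the claim that $k$ and $k'$ are continuous across an isolated contact point, so that the constants $(Q,C)$ match on both sides and (B1)--(B4) propagate by continuation of the ODE. This is assumed ("since $\g$ is smooth across each contact point"), not proved: the a priori regularity of a minimizer is only $\theta\in H^1$, i.e. $k\in L^2$, and where the constraint $\g\subset \overline{B_R}$ is active the Euler--Lagrange equation holds only in the obstacle-problem form $1-\tfrac12 k^3-k''=\mu$, with $\mu$ a measure supported on the contact set. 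One-sided (inward) admissible perturbations yield a \emph{sign} for $\mu$, never $\mu=0$; a nonzero atom of $\mu$ at a contact point produces a jump of $k'$ there, and then the first integral $C=\tfrac12 k'^2+\tfrac18 k^4-k$ and the point $Q$ really do change from one side of the contact to the other, so no single free branch covers the whole smooth boundary and Lemma \ref{be03} cannot be applied globally. Your remark that the density $1-\tfrac12 k^3-k''$ vanishes along each open arc between contacts says nothing about an atom concentrated \emph{at} the contact itself. That this step cannot be repaired as written is visible from the fact that it uses nothing about $R$ beyond the sign condition $1-\tfrac{1}{2R^3}>0$: for intermediate radii (large enough to contain your competitor drop but too small to contain the optimal unconstrained drop of Theorem \ref{bhe01}), the minimizer of \eqref{bh01} genuinely presses against $\partial B_R$ and touches it at points where the multiplier is active, so the conclusion you are deriving is false in that regime; any correct proof must use largeness of $R$ quantitatively.

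The paper avoids the gluing problem entirely by a contradiction argument that only uses Lemma \ref{be03} branch by branch. If no translation detaches the optimal drop from $\partial B_R$, the contact points cannot fit in any arc of length $<\pi R$. Each piece of $\g^*$ between consecutive contact points, or between a contact point and the cusp, is a free branch (interior perturbations are unconstrained there), hence has length at most $146$; consequently every point of $\g^*$ lies within distance $146$ of $\partial B_R$. For $R\ge 300$ the curve is thus confined to a thin annulus while its contact set spreads over more than a half-circle, which forces $\Om^*$ to contain the disc of radius $150$ about the centre; the area term alone then exceeds $\pi\cdot 150^2$, contradicting the uniform energy bound. No information about the behaviour of $k$ or $k'$ at contact points is needed. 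Your two correct observations --- that contact along an arc of $\partial B_R$ is excluded by a first-variation computation, and that the constant of Lemma \ref{be03} must be recomputed with the drop competitor bound $M_0$ --- are compatible with the paper's proof but do not close the hole at the contact points.
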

\begin{proof}
The proof relies on Lemma \ref{be03}. Let $R\ge R_0$ (the value of $R_0$ will be precised at the end of the proof). Assume that $(\Om^*, \g^*)$ is an optimal drop for problem \eqref{bh01} which touches the boundary, such that there is no translation moving the drop at positive distance from the boundary. This means that the touching points between $\g^*$ and $B_R$ are distributed in such a way that they do not fit in an arc of length less than $\pi R$. Using Lemma \ref{be03}, if $R$ is large, e.g. $R \ge 300$, then the center of $B_R$ has to be inside $\Om^*$ together with a disc of radius $150$. Indeed, the longest piece of curve between two contact points or between a contact point and the singularity has a length less than $146$.  Consequently, the energy of  $(\Om^*, \g^*)$ is larger than the area of the disc of radius
$150$: $\pi\cdot 150^2$, in contradiction with its optimality.
\begin{figure}[ht]
\begin{center}
\includegraphics[width=8cm]{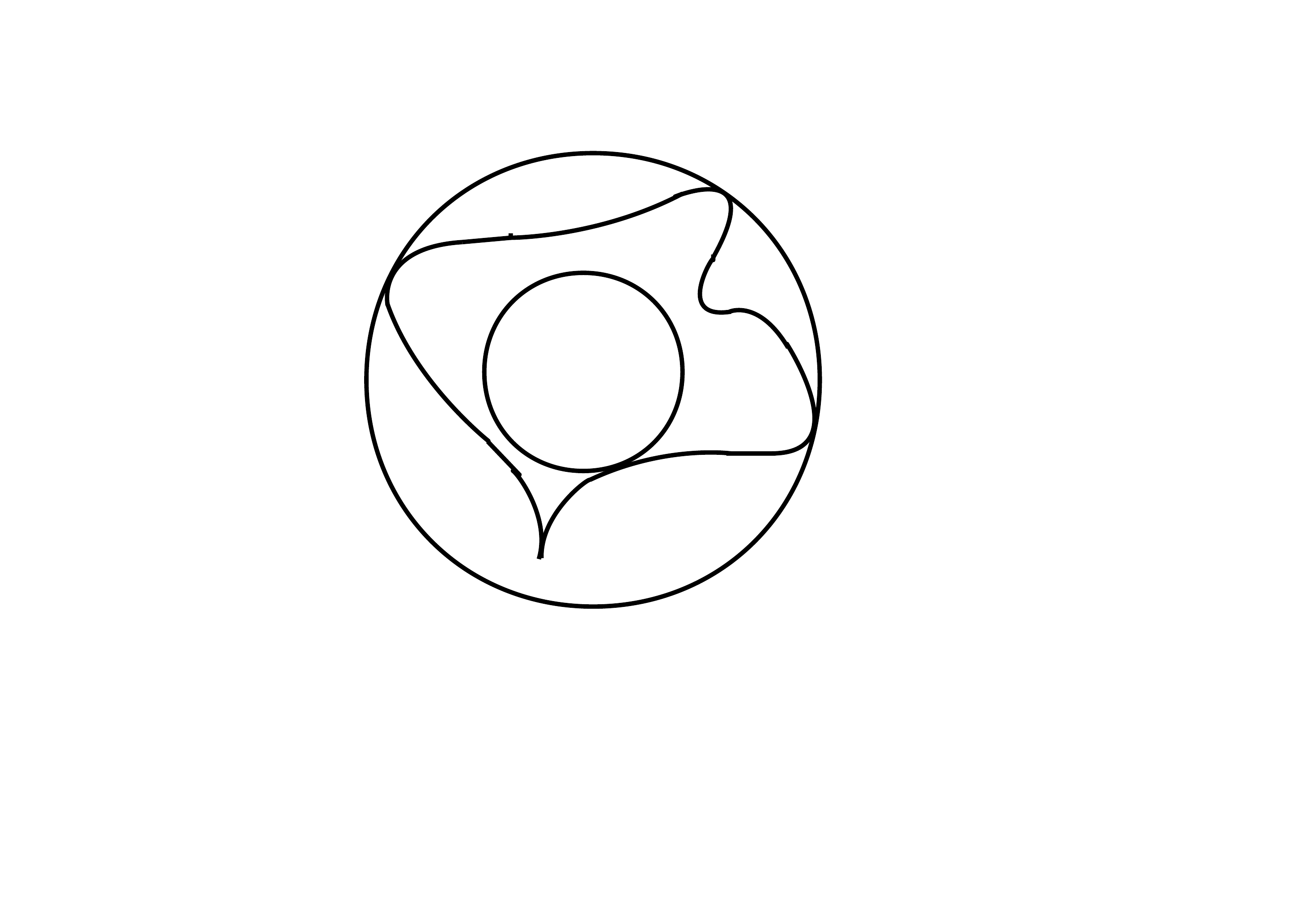}
\end{center}
\caption{An optimal drop touching the boundary}
\end{figure}
Taking $R_0= 300$, the lemma is proved.
\end{proof}

\begin{thm}\label{bhe01}
There exists a unique optimal drop $(\Om^*,\g^*)$ which minimizes the energy $E(\g)+A(\Omega)$ among all drops in $\R^2$.
This one is fully characterized by the optimality conditions (B1)-(B2) with a unique constant $C$
which can be determined.\\
Moreover
$$ E(\g^*)+A(\Om^*)  > \pi > 3\pi 2^{-\frac 52}= \frac 12 [E(\partial B_{2^{-1/3}})+ A(B_{2^{-1/3}})].$$
\end{thm}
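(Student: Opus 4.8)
The plan is to upgrade the constrained existence of Theorem \ref{bh03} to a genuine minimizer in the plane, to write the optimality conditions on the whole curve, to read off the cusp condition, and then to reduce everything to a single ODE identity.

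\emph{Existence in $\R^2$ and optimality on the full loop.} For $R\ge R_0=300$, Theorem \ref{bh03} produces a minimizing drop in $B_R$ and Lemma \ref{bhe02} lets me translate it strictly inside $B_R$; being a Jordan drop sitting in the interior, it meets neither itself nor $\partial B_R$, so its boundary minus the tip is a single free branch. By Lemma \ref{be03} this branch has length $L\le 146$, hence the drop has diameter $\le 73$ and, after translation, lies in $B_{73}\subset B_{300}$. Consequently the restricted infimum in \eqref{bh01} is independent of $R$ for $R\ge 300$ and coincides with the infimum over all drops of $\R^2$, which is therefore attained by some $(\Om^*,\g^*)$. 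On its (single) free branch the conditions (B1)--(B4) of Theorem \ref{opti} hold, with one point $Q$ and one constant $C$.

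\emph{The cusp forces $k=0$, and $C$ is determined.} At the tip the two sides carry opposite tangents, hence opposite normals $\pm\nu$, while $Q$ is common to both. Writing (B4) on each side gives $QM\cdot\nu=\tfrac12 k_+^2$ and $-QM\cdot\nu=\tfrac12 k_-^2$; adding these forces $k_+=k_-=0$, so the curvature vanishes at the tip. Thus $k$ solves the autonomous equation (B1) with first integral (B2) and $k(0^+)=k(L^-)=0$: in the $(k,k')$ phase plane it travels along $k'^2=P(k):=-\tfrac14 k^4+2k+2C$ from $(0,\sqrt{2C})$ up to the turning point $(k_M,0)$, where $k_M$ is the positive root of $P$, and symmetrically back to $(0,-\sqrt{2C})$. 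This arch is symmetric about its midpoint, so the loop is convex and closes as soon as its total turning equals $\pi$, i.e.
$$\Phi(C):=2\int_0^{k_M(C)}\frac{k\,dk}{\sqrt{P(k)}}=\pi .$$
Showing that $\Phi$ is strictly decreasing in $C$, from a value larger than $\pi$ at $C=0$ down to the limit $2$ as $C\to+\infty$, singles out a unique admissible $C$ and hence a unique optimal drop up to rigid motions. Establishing this monotonicity through the elliptic-function analysis of the Appendix is, I expect, the main obstacle.

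\emph{The energy bound.} Integrating (B4) along the loop and using the divergence theorem,
$$E(\g^*)=\tfrac12\int_{\g^*}k^2\,ds=\int_{\g^*}QM\cdot\nu\,ds=\int_{\Om^*}\dv(M-Q)\,dx=2A(\Om^*),$$
which is equivalently the optimality of the scale $t=1$ in $t^{-1}E+t^2A$. Hence $E(\g^*)+A(\Om^*)=3A(\Om^*)$, and the assertion reduces to $A(\Om^*)>\pi/3$, i.e. to $E(\g^*)=\int_0^{k_M}k^2/\sqrt{P}\,dk>\tfrac{2\pi}{3}$. Since the turning identity reads $\int_0^{k_M}k/\sqrt{P}\,dk=\pi/2$, this says precisely that the $k/\sqrt{P}$-weighted mean of $k$ exceeds $\tfrac43$; as that weight concentrates at $k_M$ and the selected $C$ gives $k_M>2$, the inequality holds with a comfortable margin and is certified by explicit estimates of these elliptic integrals at the determined $C$. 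The final inequality $\pi>3\pi\,2^{-5/2}$ is then an elementary numerical comparison.
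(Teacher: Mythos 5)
Your existence step and your cusp analysis are correct --- in fact, deducing $k=0$ at the tip by writing (B4) on the two sides, whose normals are opposite, and adding, is cleaner than the paper's own justification. But the core of your argument, the phase--plane description, is wrong: you assume the optimal drop is \emph{convex}, i.e.\ that $k$ traverses only the positive arch from $(0,\sqrt{2C})$ to $(k_M,0)$ and back, and you characterize $C$ by $\Phi(C):=2\int_0^{k_M}k/\sqrt{P(k)}\,dk=\pi$. No such drop exists. If $k\ge 0$ along the loop, then $\theta$ increases monotonically from $0$ to $\pi$, so $y'=\sin\theta\ge 0$ everywhere and $y(L)-y(0)=\int_0^L\sin\theta\,ds>0$: a curve with nonnegative curvature and total turning $\pi$ can never close up. One can also see this on your equation itself: since $2C=\tfrac14k_M^4-2k_M$, one has $2C+2k_Mx-\tfrac14k_M^4x^4=\tfrac14k_M^4(1-x^4)-2k_M(1-x)<\tfrac14k_M^4(1-x^4)$, whence $\Phi(C)>2\int_0^1 2x(1-x^4)^{-1/2}dx=\pi$ for \emph{every} $C>0$; in fact $\Phi$ decreases from $4\pi/3$ at $C=0$ to the limit $\pi$ (not $2$, as you claim) as $C\to+\infty$, so $\Phi(C)=\pi$ has no solution. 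The correct picture, which the paper establishes, is that the curvature of the optimal drop is \emph{negative near the cusp}: $k'(0^+)=-\sqrt{2C}$, the trajectory first descends to $k_m<0$, returns through $0$, rises to $k_M$ (the nose of the drop), and comes back symmetrically; the closing condition is then $I_1(C)+2I_2(C)=\pi/2$, with $I_1=\int_0^{k_M}u/\sqrt{P(u)}\,du>0$ and $I_2=-\int_0^{k_m}u/\sqrt{P(u)}\,du<0$, and it is this signed equation, decreasing from $2\pi/3$ to $-\pi/2$, that has a unique root. (That the dip must be placed symmetrically at the cusp, rather than, say, arch first and dip after, follows from the conserved point $Q=\gamma-k'\tau-\tfrac12k^2\nu$: the asymmetric traversal does not close.)

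Beyond this, the two analytic steps you yourself flag as open are genuinely needed and are absent from your proposal: the paper proves the strict monotonicity in $C$ by explicitly differentiating the elliptic integrals (change of variables $u=k_Mx$ together with $dk_M/dC=2/(k_M^3-2)$), and it proves the quantitative bound not by your weighted-mean heuristic (which in any case lives on the wrong phase portrait) but by estimating the roots of $P_C$ to get $3k_M^2+2k_mk_M+3k_m^2\ge 22/3$, hence $E(\gamma^*)\ge\tfrac{\pi}{4}\sqrt{22/3}$ and, combined with your (correct) identity $E=2A$, the claimed $E+A=\tfrac32E>\pi$. Finally, uniqueness also requires excluding critical drops whose curvature has \emph{several} periods; the paper rules these out because each period costs at least $\tfrac{\pi}{4}\sqrt{22/3}$ of elastic energy, making two or more periods cost more than $2\pi$ in total energy, while a one-period drop stays below $2\pi$. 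Your proposal silently assumes a single arch and never addresses this point.
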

\begin{rem}
Figure \ref{the optimal drop} gives the representation of the optimal drop.
\begin{figure}[ht]
\begin{center}
\includegraphics[width=8cm]{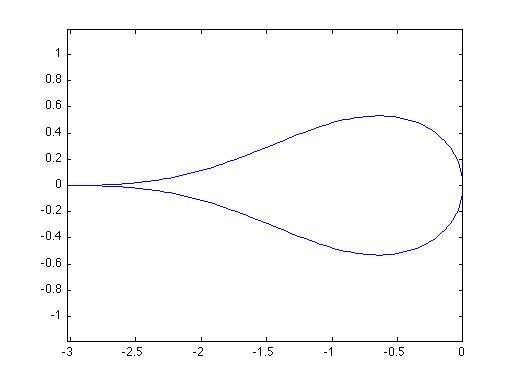}
\end{center}
\caption{The optimal drop}\label{the optimal drop}
\end{figure}
\end{rem}
\begin{proof}
The proof of existence follows from Theorem \ref{bh03} and Lemma \ref{bhe02}. The optimality conditions (B1)-(B4)
can be written on the whole $\g$ (except at the singularity) according to Theorem \ref{opti}. We start for $s=0$
at the origin which is the singular point with an horizontal tangent ($\theta(0)=0$). By (B4) and starshaped property, the point $Q$
is necessarily on the $x$-axis, the curvature $k(s)$ is negative for $s>0$ small and $k(s)\to 0$ when $s\to 0$.
The function $k(s)$ is periodic but we will prove below (see the end of the proof) that we have only one period for
the optimal drop and the curve is symmetric around the $x$-axis.
Therefore to characterize the optimal drop, we can proceed in the following way: for any constant $C>0$, we solve the ODE
\begin{equation}\label{drop1}
    \left\lbrace \begin{array}{c}
                   k''=-\frac 12 k^3 + 1 \\
                   k(0)=0 \\
                   k'(0)= - \sqrt{2C}
                 \end{array}\right.
\end{equation}
which has a unique solution. Let us denote by $s_M$ the value where $k$ is maximum with $k(s_M)=k_M$
(respectively $s_m$ and $k_m=k(s_m)$ for the minimum). The point $M_M$ of abscissa $s_M$
is necessarily on the $x$-axis and its tangent is vertical.
Thus, we look for the value of $C$ for which $\theta(s_M)=\int_0^{s_M} k(s) ds=\pi/2$.

\smallskip
We claim that conversely, if we find a value of $C$ for which $\int_0^{s_M} k(s) ds=\pi/2$, then we have found the
optimal drop. Indeed, since it satisfies the optimality conditions,
it suffices to check that the curve we obtain by $x(s)=\int_0^s \cos\theta(t) dt$ and
$y(s)=\int_0^s \cos\theta(t) dt$ with $\theta(s)=\int_0^s k(t) dt$ is an admissible drop. Since
$M_M$ is the point where the curvature is maximum, according to (B3), it is the point on $\g$
which is the farthest to $Q$. But since the tangent is vertical at this point it is necessarily on the $x$-axis: $y(s_M)=0$
and the total length of the curve is $2s_M$. Now, since $k$ is symmetric with respect to $s_M$ (see (ODE1) in the Appendix),
$k(s_M+t)=k(s_M-t)$ which provides after integration: $\theta(s_M+t)=\pi - \theta(s_M-t)$. This identity gives $\theta(2s_M)=\pi$
and
\begin{eqnarray*}
x(2s_M)=\int_0^{s_M} \cos\theta(t) dt+\int_{s_M}^{2s_M} \cos\theta(t) dt=\int_0^{s_M}
\cos\theta(t)+\cos(\pi-\theta(t)) dt=0\\ \nonumber
y(2s_M)=\int_0^{s_M} \sin\theta(t) dt+\int_{s_M}^{2s_M} \sin\theta(t) dt=\int_0^{s_M}
\sin\theta(t)+\sin(\pi-\theta(t)) dt=2 y(s_M) =0 \\ \nonumber
\end{eqnarray*}
which shows that the curve $\g$ is a drop.

Thus to prove uniqueness of the optimal drop, we need to prove that we can find only one $C>0$ for which $I(C):=\int_0^{s_M} k(s) ds=\pi/2$.
Let us write
$$\int_0^{s_M} k(s) ds=\int_0^{2s_m} k(s) ds + \int_{2s_m}^{s_M} k(s) ds=2\int_0^{s_m} k(s) ds + \int_{2s_m}^{s_M} k(s) ds$$
where we used the symmetry of $k$ with respect to $s_m$, see (ODE1). This symmetry also shows that $k(2s_m)=0$.
We are going to prove uniqueness of $C$ (and therefore of the optimal drop) by proving that the function $C\mapsto \int_0^{s_M} k(s) ds$
is strictly decreasing.
Let us perform the change of variable $u=k(s)$ in each above integral. It comes, using (B2) to express $k'$:
\begin{eqnarray}
  \int_{2s_m}^{s_M} k(s) ds = \int_0^{k_M} \frac{u}{\sqrt{2C+2u-u^4/4}} \,du\\ \nonumber
  \int_{0}^{s_m} k(s) ds = -\int_0^{k_m} \frac{u}{\sqrt{2C+2u-u^4/4}} \,du\\ \nonumber
\end{eqnarray}
Now to compute the derivative of the first integral $I_1(C)$ with respect to $C$, we make the change of variable $u=k_Mx$, it comes
$$I_1(C)=\int_0^{1} \frac{k_M^2 x}{\sqrt{2C+2k_M x-k_M^4 x^4/4}} \,dx$$
We compute the derivative of $I_1$ using $\frac{d k_M}{d C}=2/(k_M^3 -2)$ (see (ODE3) in the appendix) and an easy computation gives
$$\frac{d I_1}{d C}= \int_0^{1} \frac{6 k_M^2 x(x-1)}{(k_M^3-2)\left(2C+2k_M x-k_M^4 x^4/4\right)^{3/2}} \,dx$$
which is clearly negative. In the same way, we get for the second integral $I_2(C)=\int_{0}^{s_m} k(s) ds$:
$$\frac{d I_2}{d C}= -\int_0^{1} \frac{6 k_m^2 x(x-1)}{(k_m^3-2)\left(2C+2k_m x-k_m^4 x^4/4\right)^{3/2}} \,dx$$
which is also negative, proving the uniqueness of a solution $C$ for the equation $I_1(C)+2I_2(C)=\pi/2$.
Let us remark that a simple computation yields $I(0)=\frac{2\pi}{3}$ while the limit of $I(C)$ when $C$ goes
to $+\infty$ is $-\frac{\pi}{2}$ confirming that there exists a solution to our problem.

\medskip
Let us estimate from below the energy of the optimal drop.
Denote by $s_1=2 s_m$ the first positive zero of $k$, we recall that $s_m$ is the first minimum of $k$ and $k_m= k(s_m)$, $s_M$ the first maximum of $k$ and $k_M= k(s_M)$. From (B2) $k_m$ and $k_M$ are the real roots of the polynomial (which is concave)
\begin{equation}\label{bhe09}
P_C(X)= -\frac 14 X^4 + 2 X+2C.
\end{equation}
The maximum of $P_C$ is at $X= 2^{\frac 13}$  and $P_C(0)= 2C$. We have
\begin{equation}\label{bhe10}
k_m<0\le 2^{\frac 13} \le k_M
\end{equation}
($k_m$ can not be nonnegative, otherwise the set $\Om^*$ would be convex).

Moreover, when $C$ increases, $k_M(C) $ is increasing while $k_m(C)$ is decreasing (with increasing absolute value $|k_m(C)|$), because we translate the curve $y= - \frac 14 x^4 +2x$ up).

If we denote $S=k_M+k_m$ and $P=k_mk_M$ the sum and the product of those two roots, classical elimination and relation between roots provide
\begin{equation}\label{bhe11}
S^2=P-\frac{8C}{P} \qquad -\frac 8S = P + \frac{8C}{P}
\end{equation}
while the two complex roots $z_0, \ov z_0$ satisfy $z_0+\ov z_0 =-S$, $ z_0\ov z_0 = -\frac{8C}{P}$.

Since $P \le 0$ and $C>0$, the last equation gives $S>0$. Let us come back to the computation of the elastic energy of the optimal drop $(\Om^*, \g^*)$
$$E(\g^*)= \int_0^{s_M} k^2 ds.$$
Now
$ \int_0^{s_M} k^2 ds \ge  \int_{s_m}^{s_M} k^2 ds$ and $k$ is increasing from $s_m$ to $s_M$ (since $k'$ can only vanish at zeroes of  $P_C(X)$, which only correspond to maxima $k_M$ and minima $k_m$). We perform the change of variable $x=k(s)$ on this interval
$dx= k'(s) ds=\sqrt{2C+2k -\frac 14 k^4}ds$. Therefore
$$E(\g^*)\ge \int_{s_m}^{s_M} k^2 ds = \int_{k_m}^{k_M} \frac{x^2}{\sqrt{2C+2x -\frac 14 x^4}} dx.$$
We want to find a lower bound of this integral. For this purpose, we write (following \eqref{bhe09})
$$P_C(x)= \frac 14(k_M-x)(x-k_m)(x^2+Sx - \frac{8C}{P}).$$
Now, the parabola $y= \frac 14 (x^2+Sx-\frac{8C}{P})$ is symmetric with respect to $-\frac  S2$, an since $     \frac{k_M+k_m}{2} = \frac  S2 \ge - \frac  S2$, the maximum of $y$ on the interval $[k_m, k_M]$ is equal to
\begin{equation}\label{bhe12}
F^2 = \frac 14 ( k_M^2 + S k_M- \frac{8C}{P})= \frac 14 (2k_M^2 +k_mk_M - \frac{8C}{P}) = \frac 14 (3k_M^2+2k_mk_M+k_m^2),
\end{equation}
where we have used \eqref{bhe11} for the last equality.
Thus
$$\int_{k_m}^{k_M} \frac{x^2}{\sqrt{2C+2x-\frac 14 x^4}}dx \ge \frac 1F \int_{k_m}^{k_M} \frac{x^2}{\sqrt{(k_M-x)(x-k_m)}}dx.$$
This last integral can be computed explicitly and gives
\begin{equation}\label{bhe13}
E(\g^*)\ge \frac 1F \frac{3k_M^2+2k_mk_M+3k_m^2   }{4}\frac {\pi}{2}.
\end{equation}
We have
 $F \le \frac 12 \sqrt{3k_M^2+2k_mk_M+3k_m^2 } $ and \eqref{bhe13} gives
 \begin{equation}\label{bhe14}
 E(\g^*)\ge \frac{\pi}{4} \sqrt{3k_M^2+2k_mk_M+3k_m^2 }.
 \end{equation}
It remains to get a bound for the quantity $ H= 3k_M^2+2k_mk_M+3k_m^2 $ which depends only on $C$. We discuss two cases.

\noindent {\bf Case A.} If $C \ge 1$, $H= k_M^2 +2 k_M(k_M+k_m) + 3 k_m^2\ge k_M^2 + 3 k_m^2$. Both mappings
$C \mapsto k^2_m$, $C\mapsto k_M^2$ are increasing, thus $C \ge k_M^2 (1) + 3 k^2_m(1)$. We study $P_1(X)=-\frac 14 X^4 +2X+2$
$$P_1(\frac 73)= -\frac{241}{324}\mbox {    and    } P_1(\frac 94)= \frac{95}{1024},$$
we get
\begin{equation}\label{bhe14.b}
\frac 94 \le k_M(1) \le \frac 73.
\end{equation}
While from $P_1(-1)= -\frac 14$ and $P_1( -\frac{9}{10}) = \frac{1439}{40000}$, we get
\begin{equation}\label{bhe15}
-1 \le k_m(1)\le - \frac{9}{10}.
\end{equation}
It follows that $H \ge \Big ( \frac{9}{4}  \Big )^2+ 3 ( \frac{9}{10}  \Big )^2 = \frac{2997}{400}\approx 7.4925.$

\noindent {\bf Case B.} In the case $0 \le C\le 1$, we use $k_M^2(C) \ge k_M^2(0)=4$, $k^2_m(C)\ge 0$ and $|k_M(C)k_m(C)| \le |k_M(1)k_m(1)|\le \frac 73$ to get
$$H= 3k_M^2+2k_mk_M+3k_m^2\ge 12-\frac{14}{3}=\frac{22}{3}=7.333...$$
So in any case, $H \ge \frac{22}{3}$. It follows from \eqref{bhe13} that
\begin{equation}\label{bhe16}
E(\g^*)\ge \frac{\pi}{4} \sqrt{\frac{22}{3}}.
\end{equation}
Now, integrating (B4) on the curve, we get $2 A(\Om^*)=\int_{\g^*} \overrightarrow{QM}\cdot \vec{ \nu} ds = \frac 12 \int_{\g^*}k^2 ds = E(\g^*)$.

Therefore
\begin{equation}\label{bhe17}
E(\g^*) + A(\Om^*) = \frac 32 E(\g^*) \ge \frac{3\pi}{8} \sqrt{\frac{22}{3}} > \pi > 3\pi 2^{-\frac 53}.
\end{equation}
Let us now conclude by proving that the optimal drop has only one period of the function $k(s)$.
The estimate (\ref{bhe16}) we get is actually true on any possible period. Therefore, if we have a solution $(\g_2^*,\Om_2^*)$
with at least two periods,
we would have $E(\g_2^*)\ge \frac{\pi}{2} \sqrt{\frac{22}{3}}$, therefore like in (\ref{bhe17}) its total
energy would satisfy $E(\g_2^*) + A(\Om_2^*) > 2\pi$.
Now, proceeding in a similar way as we did for the estimate from below, we can get (details omitted) an estimate from above
for an optimal drop with only one period which is
$$E(\g^*) + A(\Om^*) \leq  2\pi$$
(the exact value is $E(\g^*) + A(\Om^*) \simeq 4.6823$) therefore, any critical point with more than one period cannot be optimal.

\end{proof}

\section{Proof of Theorem \ref{bhe05}}\label{s4ab}
 With the notation settled in Sections \ref{prel} and \ref{drops} we return to problem \eqref{bhe06}, and write
\begin{equation}\label{bh01l}
\inf\{  E(\g) +|\Om| : \Om \mbox{ smooth, bounded, simply connected set }, \partial \Om =\g\}.
\end{equation}

First of all we recall that among all circles, the optimal one has the radius $r= 2^{-\frac 13}$. Let us consider $R\ge 300$ and solve the problem
\begin{equation}\label{bh01l2}
\inf\{  E(\g) +|\Om| : \Om \mbox{ smooth, bounded, simply connected set}, \Om \sq B_R, \partial \Om =\g\}.
\end{equation}
 Using the same arguments as in Section \ref{drops}, a minimizing sequence will converge to a couple $(\Om, \g)$. Two possibilities occur. Assume first that there are self intersections. In this case the limiting couple   $(\Om, \g)$ contains at least two drops, as in Case 3 of Theorem \ref{bh03}. Following Theorem \ref{bhe01}, this configuration can not be optimal since the energy of $\Om$ is larger than the double of the optimal energy of a drop, so it is excluded.

The second situation is that $(\Om, \g)$ does not have self-intersections. Since the radius is large enough, for a suitable translation the loop does not touch the boundary of the ball, as in Lemma \ref{bhe02}.  Moreover, in this case the optimality conditions $OM.\nu = \frac 12 k^2$ can be written on the full boundary.

\begin{rem}
This condition recalls the result of Ben Andrews (see Theorem 1.5 in \cite{andrews}) which, under the hypothesis of positive curvature would allow directly to conclude that the curve is a circle with a radius  equal to $2^{-\frac 13}$ (by direct computation). As the curvature is not known to be positive, for our conclusion we shall use again the optimality conditions.
Actually, Andrews's result does not hold true for non convex curves. Indeed, Figure \ref{23fig} (which has been obtained using the optimality conditions) shows a curve which satisfies $OM.\nu = \frac 12 k^2$ on the whole boundary.
\end{rem}

If the curvature is not constant, we can assert that $\Om$ is star shaped and  the structure of $\g$ is a union of periods consisting of two branches $\g_1$ and $\g_2$, where $\g_1: [0, l]\ra \R^2$ is a branch of the curve with increasing curvature such that $\g(0)=k_m, \g(l)=k_M$ and $\g_2: [l, 2l]\ra \R^2$ is a congruent branch with decreasing curvature from $k_M$ to $k_m$. Following \eqref{B4} and (ODE4), $\g$ consists of  one, two or three periods $(\g_1,\g_2)$ (as explained in the proof of Theorem \ref{bhe01}.
From the optimality conditions $(B_1)-(B_4)$ one can eliminate any of those three configurations, since their energy is much larger than the one of the ball.
Indeed, in the case of two or three periods, a couple $(\g_1,\g_2)$ has a cap $\g_{(l-a, l+a)}$, where $a$ is chosen such that $\nu_{\g(a)}$ is orthogonal to the segment $O\g(l)$. As on Figure \ref{23fig}, we can cut and reflect along the line $\g(l-a), \g(l+a)$ to get a new domain
which smaller area and smaller elastic energy.
\begin{figure}[!h]
\begin{center}
\includegraphics[width=10cm,trim=50 200 50 200,clip=true]{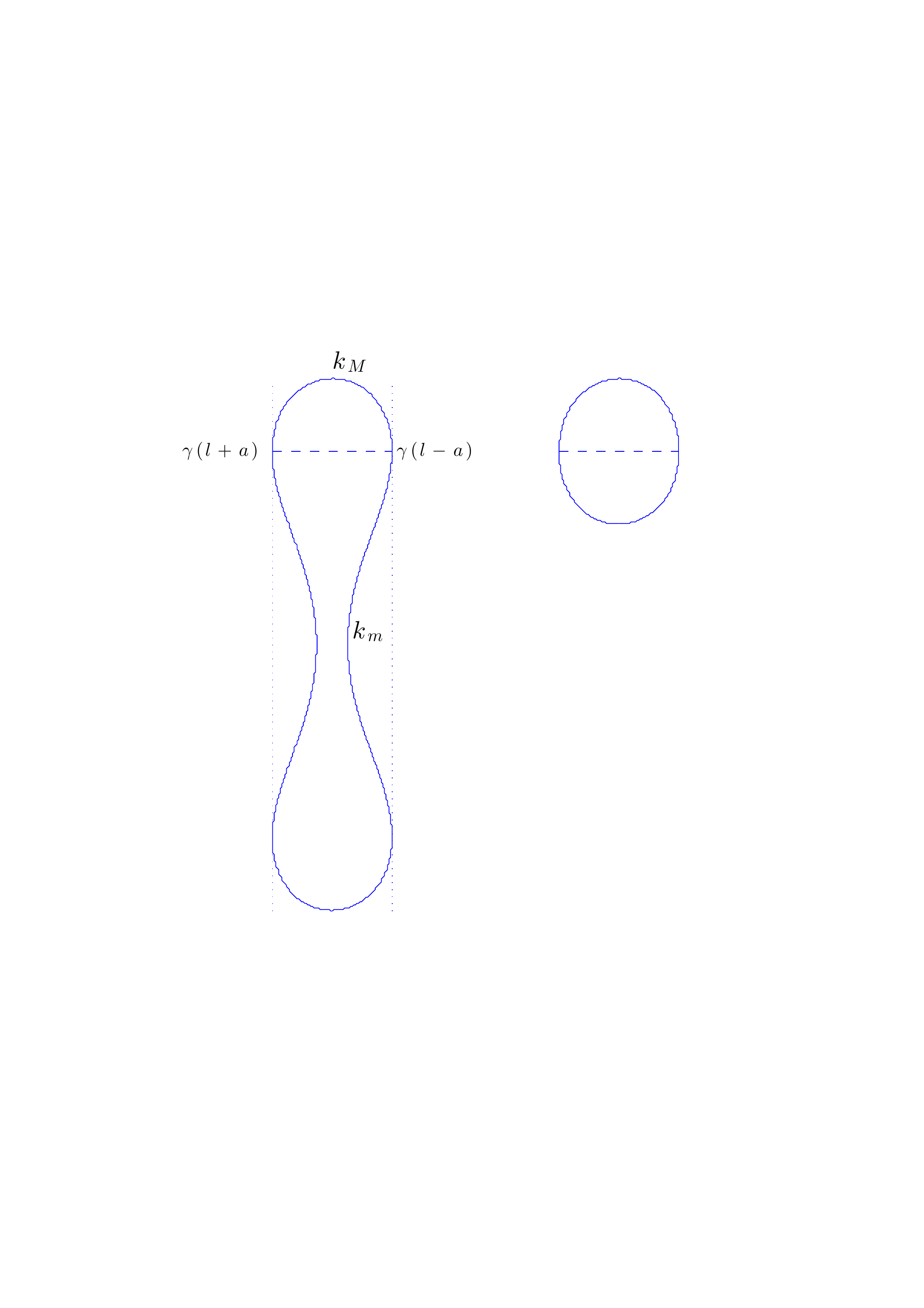}
\end{center}
\caption{The case of more than one period.}\label{23fig}
\end{figure}
If there is only one period, the argument is similar since we can center-symmetrize the branch from $\g(l-a)$ to $\g(l)$, where $a$ is chosen such that the normal at  $\g(l-a)$ is parallel to the segment $O\g(l)$  (see Figure \ref{1fig}).
\begin{figure}[!h]
\begin{center}
\includegraphics[width=10cm,trim=50 200 50 200,clip=true]{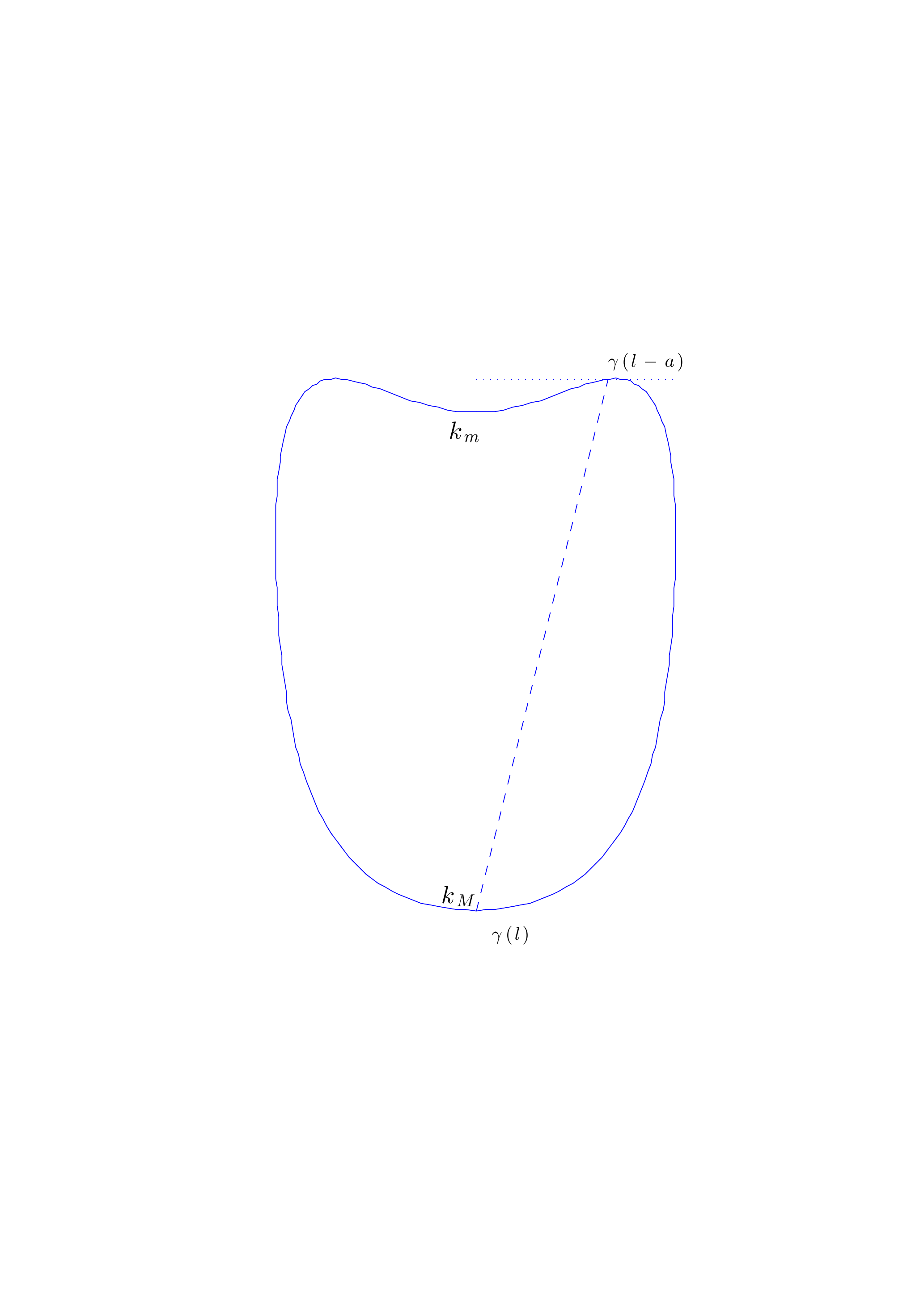}
\end{center}
\caption{The case of  one period.}\label{1fig}
\end{figure}

Both previous constructions are admissible as a consequence of the optimality conditions  $(B_1)-(B_4)$.

\section{Appendix: analysis of the ODE issued from optimality conditions}
In this section, we give several properties of the following ODE in nonstandard form
$${k'}^2= -\frac 14 k^4+2k +2C,$$
where $C\in \R$ is a constant. This ODE is issued from the optimality conditions on a free branch of a minimizer for our problem, see Theorem \ref{opti}. We also refer the reader to reference \cite{bht14} for related analysis.

Clearly, $C \ge -\frac 34 2^{ \frac13}\approx -0.944$, otherwise the right hand side is negative. We denote $k_m(C)\le k_M(C)$
the two real roots of the polynomial $P_C(X)= -\frac 14 X^4+2X +2C$, or if there is no ambiguity simply $k_m, k_M$.

Here we gather some immediate facts concerning this ODE.

\begin{enumerate}
\item[(ODE1)] The solution of the ODE is periodic (the period is denoted by $T$), symmetric with respect to its minimum or maximum.

\item[(ODE2)]\label{ODE2}
 The only local minima (maxima) are actually global minima (maxima, respectively) and correspond to $k=k_m$ ($k=k_M$, respectively), and $k$ is monotone between these two values.

 \item[(ODE3)]\label{ODE3} The mapping $C\mapsto k_M(C)$ is increasing and its range is from $2^{ \frac13}$ to $+\infty$, while the mapping
  $C\mapsto k_m(C)$ is decreasing and its range is from $-\infty$ to
 $2^{ \frac13}$.  Moreover, $k_m(C)<0$ when $C>0$, $\frac 94 \le k_M(1)\le \frac 73$, $-1 \le k_m(1) \le -\frac {9}{10}$, $-C \le k_m(C)$. As well,  $k_M(C) \ge 2+C$ for $-\frac 32 \times 2^{ \frac13} \le C\le 0$.

 \item[(ODE4)]  The integral $\frac 12 \int _0^T k^2 ds $ on one period is estimated from below
 $$\frac 12 \int _0^T k^2 ds \ge \frac {\pi}{ 4}  \sqrt{\frac{22}{ 3}}.$$
\end{enumerate}

\smallskip
\noindent The proof of (ODE1) is classical, either working with the closed orbit, or using an explicit form of the solution thanks to elliptic functions.

\smallskip
\noindent The proof of (ODE2) is easy since $k'$ can vanish only at the zeroes of $P_C$.

\smallskip
\noindent For the proof of (ODE3) we notice that $\frac {d k_M}{dC} =\frac{2}{k_M^3-2}> 0$ and $\frac {d k_m}{dC} =\frac{2}{k_m^3-2}< 0$, $k_m(0)=0, k_M(0)=2$, $P_C(-C)<0\Lra k_m(C)\ge -C$, $P_C(2+C)= -C[\frac 14 C^3 +2C^2+6C+4]\ge 0$ and the bounds for $k_m(1), k_M(1)$ have been obtained in \eqref{bhe14.b}, \eqref{bhe15}.

\smallskip
\noindent  The proof of (ODE4): we have already proved this inequality in Section \ref{drops}, when $C\ge 0$. It remains the case $-\frac 34 2^{\frac 13} \le C \le 0$.  In this case, we have $k_m \ge C$ and $k_M\ge 2+C$, so $3k_M^2+2k_mk_M+3k_m^2\ge 4C^2 +8C+2\ge 8\ge \frac {22}{3}$, and the result follows in the same way.

\subsection*{Acknowledgement
 and History}
This work has been initiated during a stay of the two authors in the Isaac Newton Institute, Cambridge in March 2014 during the programme
"Free Boundary Problems and Related Topics". The authors are very grateful to the Institute for the very good and stimulating
atmosphere here. The results of this paper have been announced in several Conferences (e.g. Petropolis in August 2014, Linz
in October 2014). While completing this manuscript, we learnt from  Bernd Kawohl that he, with Vincenzo Ferone and Carlo Nitsch,
proved our Theorem \ref{bhe05} by a different way. Their manuscript is available on ArXiv, see Reference \cite{FKN}.

\noindent Dorin Bucur, Laboratoire de Math\'ematiques, CNRS UMR 5127,
Universit\' e de Savoie, Campus Scientifique, 73376 Le-Bourget-Du-Lac, France,  {\tt{dorin.bucur@univ-savoie.fr}}

\medskip
\noindent Antoine Henrot, Institut Elie Cartan de Lorraine, CNRS UMR 7502 and Universit\'e de Lorraine, BP 70239
54506 Vandoeuvre-l\`es-Nancy, France,
\tt {antoine.henrot@univ-lorraine.fr}

\end{document}